\newcommand*\Let[2]{\State #1 $\gets$ #2}
\newtheorem{theorem}{Theorem}[section]
\newtheorem{corollary}[theorem]{Corollary}
\newtheorem{lemma}[theorem]{Lemma}
\newtheorem{proposition}[theorem]{Proposition}
\theoremstyle{definition}
\title{Enumeration of Seidel matrices}
\author{Ferenc Sz\"oll\H{o}si and Patric R.J. \"Osterg\aa rd}
\date{\today. Preprint. This research was supported in part by the Academy of Finland, Grant \#289002}
\address{F. Sz. and P.R.J. \"O.: Department of Communications and Networking, Aalto University School of Electrical Engineering, P.O. Box 15400, 00076 Aalto, Finland}
\email{szoferi@gmail.com, patric.ostergard@aalto.fi}
\begin{document}
\begin{abstract}
In this paper Seidel matrices are studied, and their spectrum and several related algebraic properties are determined for order $n\leq 13$. Based on this Seidel matrices with exactly three distinct eigenvalues of order $n\leq 23$ are classified. One consequence of the computational results is that the maximum number of equiangular lines in $\mathbb{R}^{12}$ with common angle $1/5$ is exactly $20$.
\end{abstract}
\maketitle

\section{Introduction}
A Seidel matrix $S$ of order $n$ is an $n\times n$ symmetric $(-1,0,1)$-matrix with $0$ diagonal and $\pm1$ entries otherwise. Seidel matrices were introduced in \cite{vLS} in connection with equiangular lines in Euclidean spaces \cite{HAA}, and were subject of active research during the late $1970$s \cite{C}, \cite{LS}, \cite{T}, \cite{TTH}. Recently, there has been a renewed interest in equiangular lines, see \cite{AM}, \cite{BMV}, \cite{BUKH2}, \cite{ABARG}, \cite{BU}, \cite{DC}, \cite{FS}, \cite{RND}, \cite{GR}, \cite{GRWSN}, \cite{GRWSN2}, \cite{HAM}, \cite{MAK}, \cite{OKUDA} because new results emerged both from new theoretical insights and from computer calculations. In this paper we study small Seidel matrices and their algebraic properties via computer-aided methods. Similar studies were done earlier in \cite{BMS}, \cite{GRWSN2}, and \cite{vLS}.

Two Seidel matrices, $S_1$ and $S_2$, are equivalent, denoted by $S_1\sim S_2$, if there exists a signed permutation matrix $P$ such that $S_2=PS_1P^T$. The equivalence operations are therefore simultaneous row and column permutations, and simultaneous multiplication of a row and the respective column by $-1$. The signed permutation matrices $P$ for which $S=PSP^T$ form the full automorphism group of $S$, denoted by $\mathrm{Aut}(S)$. The identity matrix $I$ and its negative $-I$ are automorphisms of every $S$. The number of Seidel matrices of order $n$ up to equivalence is well-known; for an explicit formula see \cite{L}, \cite{MS}, \cite{R}. Similarly, an explicit formula to determine the number of self-complementary Seidel matrices (for which $S\sim -S$) can be found in \cite{NEWSCREF}. Here we are interested in efficiently generating and studying Seidel matrices from an algebraic perspective.

Let $S$ be a Seidel matrix of order $n\geq 2$ with spectrum $\Lambda(S)$. Since $S$ is symmetric, the elements of the multiset $\Lambda(S)$ are real numbers. Let $\lambda_{\min}\in\Lambda(S)$ denote the smallest eigenvalue of $S$. Assuming that the multiplicity of $\lambda_{\min}$ is $n-d$ (for some $d\leq n-1$), then $G:=-S/\lambda_{\min}+I$ is a positive-semidefinite matrix of rank $d$. The matrix $G$ can be viewed as a Gram matrix corresponding to an equiangular line system of $n>d$ lines in $\mathbb{R}^{d}$, where the pairwise inner product between any two distinct lines is $\pm1/\lambda_{\min}$, see \cite{LS}, \cite{vLS}. The value $\alpha:=-1/\lambda_{\min}$ is called the common angle between the lines, and is known to be the inverse of an odd integer once $n>2d$. This correspondence motivates the study of Seidel matrices and highlights the importance of their smallest eigenvalue.

The research carried out in this paper was motivated by the question of the existence of $29$ equiangular lines in $\mathbb{R}^{14}$ with common angle $1/5$, see \cite{GRWSN}, \cite{GRWSN2}. While we were not able to settle this problem, the computational results presented here can be viewed as a first minor step towards its resolution. The outline of this paper is as follows. In Section~\ref{sect2} we describe the graph-theoretical framework required for the computer representation of Seidel matrices. In Section~\ref{sect3} we study the characteristic polynomial of Seidel matrices of order $n\leq 13$, and we determine the number of cospectral Seidel matrices; the number of Seidel matrices with exactly $k$ distinct eigenvalues ($1\leq k\leq 13$); and the number of Seidel matrices with smallest eigenvalue $\lambda_{\min}\in\{-3,-5,-7\}$. Similar results are known for graphs, see \cite{graphspectra2}, \cite{graphspectra1}. In Section~\ref{sect4} we give a further look at Seidel matrices with exactly three distinct eigenvalues and generate and enumerate them up to $n\leq 23$. Several hypothetical Seidel matrices with integer spectrum were shown not to exist. This extends some earlier work done in \cite{GRWSN2}. Additionally, we observe that there exist Seidel matrices of order $18$ and $30$ which do not correspond to a regular graph, thus answering a recent question raised in \cite[Question~B]{GRWSN}. Again, similar classification results are known for graphs with few distinct eigenvalues, see \cite{VD}, \cite{vDS}, \cite{MzK}. In Sections~\ref{sect3} and \ref{sect4} we also recall several elementary matrix theoretical tools from \cite{HJ95}. In Section~\ref{sect5} we give a look at the equiangular lines problem from a computational perspective. Our computation reveals that the maximum number of equiangular lines in $\mathbb{R}^{12}$ with common angle $1/5$ is $20$. This resolves a question left open in \cite{GRWSN2}. In Section~\ref{sect6} we conclude the paper with the description of certain consistency checks we made during our computations for verification purposes. 

\section{Computer calculations of Seidel matrices}\label{sect2}
In this section we set up a computational framework for Seidel matrices. The standard way to deal with equivalence of various combinatorial objects is to represent them by graphs in a way so that the equivalence (and therefore automorphisms) of those are described by isomorphisms (and automorphisms) of graphs. Isomorphism of graphs is then decided via a canonical labeling algorithm. We use the C library \emph{nauty} (see \cite{MK}), which was designed to determine isomorphism of (colored) graphs.
\subsection{Representation of Seidel matrices}
Let $S$ be a Seidel matrix of order $n$. Then $A:=(J-I-S)/2$ (where $I$ is the identity matrix, and $J$ is the matrix with entries $1$ in every position) is the adjacency matrix of what is called the ambient graph $\Gamma(S)$, corresponding to $S$. However, since equivalent Seidel matrices can have nonisomorphic ambient graphs, this naive correspondence does not faithfully represent Seidel matrices and their symmetries. The set of nonisomorphic graphs corresponding in this way to a Seidel equivalence class is called the switching class of $S$. A result of Seidel implies that if $n$ is odd then every switching class contains a unique (up to isomorphism) graph having all vertex degrees even \cite{SEL}; there is no similar characterization when $n$ is even \cite{MS}.

In order to represent Seidel matirces faithfully, we assign to a Seidel matrix $S$ of order $n$ a two-colored graph on $3n$ vertices, denoted by $X(S)$. The vertex set of $X(S)$ is
\[V(X(S))=\{u_0,u_1,\hdots, u_{n-1}\}\cup\{v_{0}^{(0)},v_0^{(1)},v_1^{(0)},v_1^{(1)},\hdots,v_{n-1}^{(0)},v_{n-1}^{(1)}\}.\]
The vertices $u_i$, $i\in\{0,\hdots,n-1\}$ form color class $0$, and all other vertices form color class $1$. The edge set of $X(S)$ is
\begin{multline*}
E(X(S))=\{\{u_i,v_i^{(k)}\} \colon i\in\{0,\dots,n-1\}; k\in\{0,1\}\}\\
\cup\{\{v_i^{(k)},v_j^{(k)}\} \colon S_{ij}=1,k\in\{0,1\}\}\cup\{\{v_i^{(k)},v_j^{(1-k)}\}\colon S_{ij}=-1; k\in\{0,1\}\}.
\end{multline*}
 Two graphs, $X_1$ and $X_2$, which are colored using the same set of colors are isomorphic, denoted by $X_1 \cong X_2$ if there is a color preserving and incidence preserving bijection between their vertices. The following fundamental lemma ascertains that studying the graphs described above is exactly the same as studying Seidel matrices. See \cite[Section~3.3.2]{KO} and \cite{MKHAD} for results of similar flavor.

\begin{lemma}\label{l21}
Let $S_1$ and $S_2$ be Seidel matrices. Then $S_1\sim S_2$ if and only if $X(S_1)\cong X(S_2)$. Moreover $\mathrm{Aut}(S_1)$ and $\mathrm{Aut}(X(S_1))$ are isomorphic as groups.
\end{lemma}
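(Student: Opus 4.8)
The plan is to prove both assertions at once by exhibiting, for any two Seidel matrices $S_1,S_2$ of order $n$, an explicit bijection between the set of signed permutation matrices $P$ with $S_2=PS_1P^T$ and the set of colour-preserving isomorphisms $X(S_1)\to X(S_2)$; specialising to $S_1=S_2=S$ and checking that this bijection respects composition then yields $\mathrm{Aut}(S)\cong\mathrm{Aut}(X(S))$. Throughout I would record a signed permutation matrix as a pair $(\pi,\varepsilon)$, where $\pi$ is the underlying permutation of $\{0,\dots,n-1\}$ and $\varepsilon_i\in\{+1,-1\}$ the signs, so that $(PS_1P^T)_{\pi(i)\pi(j)}=\varepsilon_i\varepsilon_j(S_1)_{ij}$, and I would encode each sign by a bit $s_i\in\{0,1\}$ via $\varepsilon_i=(-1)^{s_i}$.

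For the forward direction, given $(\pi,\varepsilon)$ with $S_2=PS_1P^T$, I would define $\phi(u_i)=u_{\pi(i)}$ and $\phi(v_i^{(k)})=v_{\pi(i)}^{(k\oplus s_i)}$, where $\oplus$ is addition modulo $2$. This is visibly a colour-preserving bijection, so it remains only to verify edge preservation. The cherry edges $\{u_i,v_i^{(k)}\}$ are preserved by construction, and for the edges inside colour class $1$ a short case analysis on whether $s_i=s_j$ shows that $\{v_i^{(k)},v_j^{(\ell)}\}$ is an edge of $X(S_1)$ exactly when its image is an edge of $X(S_2)$; the two resulting cases are governed precisely by the identity $(S_2)_{\pi(i)\pi(j)}=\varepsilon_i\varepsilon_j(S_1)_{ij}$. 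This is the step where the correspondence ``swap the two layers at index $i$ $\leftrightarrow$ negate row and column $i$'' is used.

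For the reverse direction, the key structural observation is that the edges of $X(S)$ running between the two colour classes form $n$ disjoint cherries $v_i^{(0)}-u_i-v_i^{(1)}$: each colour-$0$ vertex $u_i$ has degree exactly $2$ with both neighbours in colour class $1$, while each colour-$1$ vertex $v_i^{(k)}$ has exactly one neighbour $u_i$ in colour class $0$. A colour-preserving isomorphism $\psi\colon X(S_1)\to X(S_2)$ must therefore map cherries to cherries, sending centres to centres and leaves to leaves; this produces a permutation $\pi$ with $\psi(u_i)=u_{\pi(i)}$ together with, for each $i$, a bit $s_i$ recording whether the two leaves of the $i$-th cherry are interchanged. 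Reading the preservation of the colour-$1$ edges through these data, and inverting the case analysis above, recovers exactly $(S_2)_{\pi(i)\pi(j)}=\varepsilon_i\varepsilon_j(S_1)_{ij}$, that is $S_2=PS_1P^T$ for $P=(\pi,\varepsilon)$. The two constructions are mutually inverse, which establishes the bijection and hence the first assertion.

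For the group statement I would specialise to $S_1=S_2=S$ and verify that $P\mapsto\phi_P$ is a homomorphism, i.e.\ $\phi_{PQ}=\phi_P\circ\phi_Q$; this reduces to checking that composing two layer-swap-plus-permutation maps matches the (wreath-product-style) multiplication of signed permutation matrices, with permutations composing and swap bits adding modulo $2$ along the permutation. Being a composition-respecting bijection, $\phi$ is then an isomorphism of groups. I expect the main obstacle to be the sign bookkeeping: making the encoding $\varepsilon_i=(-1)^{s_i}$ interact correctly with both kinds of colour-$1$ edges in both directions, and fixing conventions so that the last map is a genuine homomorphism rather than an anti-homomorphism (which, if it arises, is repaired by passing to inverses or transposes).
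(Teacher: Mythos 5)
Your proposal is correct and follows essentially the same route as the paper: both arguments rest on the observation that the inter-colour edges form $n$ cherries $v_i^{(0)}\!-\!u_i\!-\!v_i^{(1)}$, so that colour-preserving isomorphisms decompose into a permutation of the $u_i$ plus per-index layer swaps, matching row/column permutations plus sign switches. Your packaging as a single composition-respecting bijection is a somewhat cleaner way to obtain the group isomorphism than the paper's generator-based discussion, but the underlying ideas and verifications are the same.
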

\begin{proof}
It is easy to see that the equivalence operations on Seidel matrices $S$ correspond to simple relabeling operations on their graphs $X(S)$. Indeed, a row and column permutation of $S$ would permute the vertices $u_i, i\in\{0,\hdots, n-1\}$ within themselves, and multiplication by $-1$ row and column $j\in\{0,\hdots, n-1\}$ of $S$ would transpose the vertices $v_j^{(0)}$ and $v_j^{(1)}$ in $X(S)$. Therefore equivalence of Seidel matrices implies isomorphism of their graphs.

To see the converse direction, one should prove that from a two-colored, unlabeled graph $Y\cong X(S)$, the unknown Seidel matrix $S$ can be uniquely reconstructed, up to equivalence. This reconstruction procedure is done via assigning labels to the vertices of $Y$. Let us label the vertices belonging to the $n$-element color class as $\{y_0,\hdots, y_{n-1}\}$ (in any order); the vertex with label $y_i$ will represent the $i$th row (and column) of a symmetric matrix $T$ with $0$ diagonal. Additionally, we see that for every $i\in\{0,\hdots n-1\}$ vertex $y_i$ is adjacent to two other vertices, labeled as $z_i^{(0)}$ and $z_i^{(1)}$, which are themselves non adjacent and belong to the $2n$-element color class. Furthermore for indices $i\neq j\in\{0,\hdots , n-1\}$ either of the following two conditions hold: (a) $z_i^{(0)}$ is adjacent to $z_j^{(0)}$ and $z_i^{(1)}$ is adjacent to $z_j^{(1)}$; or (b) $z_i^{(0)}$ is adjacent to $z_j^{(1)}$ and $z_i^{(1)}$ is adjacent to $z_j^{(0)}$. From this particular labeling we can set $T_{ij}=1$ if condition (a) holds for the indices $i\neq j$, and we set $T_{ij}=-1$ otherwise. What remains to be seen is that the equivalance class of $T$ is invariant up to the chosen labeling of $Y$. This is indeed the case, as any relabeling of the vertices $y_i$ would correspond to row and column permutations of $T$, and the transposition of vertices $z_i^{(0)}$ and $z_i^{(1)}$ would change the element $T_{ik}$ to $-T_{ik}$, and $T_{ki}$ to $-T_{ki}$ for every $k\in\{0,\dots,n-1\}$. Since both of these operations leave $T$ invariant up to Seidel equivalence, and no other relabeling is possible, this proves that $T$ is unique up to equivalence.

Finally, the symmetry group $\mathrm{Aut}(S)$ acting on a Seidel matrix $S$ is generated by the row/column permutations $P\in \mathrm{Aut}(S)$; and by the row/column switching operations $D\in \mathrm{Aut}(S)$. These generators correspond to the generators of the symmetry group $\mathrm{Aut}(X(S))$ of the graph $X(S)$, where $p\in\mathrm{Aut}(X(S))$ is some permutation of the vertices $u_i$; and $d\in\mathrm{Aut}(X(S))$ is the transposition of the vertices $v_j^{(0)}$ and $v_j^{(1)}$ for some $j\in\{0,\hdots, n-1\}$. This correspondence is a group isomorphism.
\end{proof}

The automorphism group of a graph partitions its vertex set into vertex orbits. Two vertices, $u$ and $v$ are in the same vertex orbit, denoted by $u\sim v$, if there is an automorphism mapping $u$ onto $v$. If two vertices are in the same orbit, then they have necessarily the same color, thus the vertex orbits form a partition of the individual color classes. We will see shortly the significance of finding out whether a vertex forms a one-element orbit. This can be determined in certain cases by vertex invariants. A vertex invariant is a vertex function $f$ taking the same value on the elements of a vertex orbit. We assume that the range of $f$ is linearly ordered. Clearly, if $u$ and $v$ we are distinct vertices with $f(u)\neq f(v)$, then they cannot belong to the same vertex orbit. In particular, if a certain value is assumed only once, then the corresponding vertex forms a one-element orbit.

We note the following elementary, yet important property (see e.g.\ \cite{GODSILXXX}).

\begin{lemma}\label{lemma2}
Let $X$ be a graph. If two vertices $u$ and $v$ are in the same vertex orbit, then the subgraphs induced by $V(X)\setminus \{u\}$ and $V(X)\setminus \{v\}$, obtained by deleting $u$ and $v$, are isomorphic.
\end{lemma}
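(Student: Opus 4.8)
The plan is to produce the desired isomorphism explicitly from the automorphism witnessing that $u$ and $v$ lie in the same orbit. By hypothesis there exists $\sigma\in\mathrm{Aut}(X)$ with $\sigma(u)=v$. Since $\sigma$ is a bijection of $V(X)$ sending $u$ to $v$, its restriction to the complement $V(X)\setminus\{u\}$ is a bijection onto $V(X)\setminus\{v\}$; this is the candidate map, and the whole proof amounts to checking that this restriction is an isomorphism of the two induced subgraphs.

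First I would fix $\sigma$ and set $\varphi:=\sigma|_{V(X)\setminus\{u\}}$, verifying that $\varphi$ is well defined (its image avoids $v$ precisely because $\sigma$ is injective and already uses $v$ as the image of $u$) and bijective onto $V(X)\setminus\{v\}$. Next I would observe that for any two vertices $a,b\in V(X)\setminus\{u\}$, the edge $\{a,b\}$ belongs to the subgraph induced on $V(X)\setminus\{u\}$ if and only if $\{a,b\}\in E(X)$, and likewise $\{\varphi(a),\varphi(b)\}$ lies in the subgraph induced on $V(X)\setminus\{v\}$ if and only if $\{\sigma(a),\sigma(b)\}\in E(X)$. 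Because $\sigma$ is an automorphism, adjacency is preserved and reflected, so $\{a,b\}\in E(X)\iff\{\sigma(a),\sigma(b)\}\in E(X)$, and therefore $\varphi$ preserves and reflects adjacency within the induced subgraphs. This gives the claimed isomorphism.

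There is essentially no hard part: the only point requiring a moment of care is confirming that passing to induced subgraphs does not introduce or destroy any edges, i.e.\ that restricting the vertex set does not affect the adjacency relation among the surviving vertices, which is exactly the definition of an induced subgraph. I would also note in passing that the same argument goes through verbatim if $X$ carries a vertex coloring, as in our application to $X(S)$: an automorphism necessarily preserves color classes, so $\varphi$ is color preserving as well, and the induced subgraphs are isomorphic as colored graphs.
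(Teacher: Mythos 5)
Your proof is correct and is exactly the standard argument: restrict an automorphism carrying $u$ to $v$ to the vertex complement and check it preserves adjacency (and colors). The paper itself gives no proof of this lemma, merely citing it as elementary, so your write-up supplies precisely the argument the authors are implicitly relying on.
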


We observe that the orbit partition of the color $0$ vertices of $X(S)$ in turn induces a partition of the rows of $S$. We call this partition the row orbit partition of $S$. As a consequence of Lemma~\ref{lemma2} if $u_i\sim u_j$ are color $0$ vertices of $X(S)$ for some $i,j\in\{0,\dots, n-1\}$, then removing either of the $i$th row and column, or the $j$th row and column of $S$ would yield equivalent sub-Seidel matrices of order $n-1$.

The C library \emph{nauty} \cite{MK} performs canonical labeling, determines the automorphism group, and determines the vertex orbits under the action of the automorphism group of a colored graph. We note that this implementation of canonical labeling respects the order of the color classes. In particular, in any canonical labeling we encounter, labels $0,\dots,n-1$ will be assigned to vertices $u_i$ and labels $n,\dots,3n-1$ to vertices $v_i^{(k)}$ of $X(S)$, $i\in\{0,\hdots, n-1\}$, $k\in \{0,1\}$.

\subsection{Generation of Seidel matrices}\label{sss2}

We have generated the Seidel matrices of order $n\leq 13$ with a variant of canonical augmentation, see \cite{BR}, \cite[Section~4.2.3]{KO} and \cite{MKISO}.

Given a set $\mathcal{S}$ containing exactly one representative of every Seidel matrix of order $n$ up to equivalence, our goal is to generate and record in a set $\mathcal{T}$ exactly one representative of every Seidel matrix $\widehat{S}$ of order $n+1$, up to equivalence. We augment every starting-point Seidel matrix $S\in\mathcal{S}$ with a new row and column in every possible way, and record in a set $\mathcal{U}$ the augmented matrices $\widehat{S}$ up to equivalence. For a given matrix $\widehat{S}$ we construct the graph $X(\widehat{S})$, and call \emph{nauty} to determine a canonical labeling and the vertex orbits of the color $0$ vertices under the action of the automorphism group. Let $p\in V(X(\widehat{S}))$ be the vertex which got canonical label $0$. The strategy for rejecting potential isomorphic graphs is the following: on the one hand, if $u_n$, which is the most recently appended color $0$ vertex of $X(\widehat{S})$, and $p$ are in different orbits, then $\widehat{S}$ is immediately discarded. On the other hand, if $u_n\sim p$, then the canonical graph of $X(\widehat{S})$ is compared to the canonical graphs representing the elements of $\mathcal{U}$. The matrix $\widehat{S}$ is recorded and added to $\mathcal{U}$ if it has not been found so far, and discarded otherwise. Once all possible augmentations of $S$ were inspected, the contents of $\mathcal{U}$ are appended to $\mathcal{T}$, $\mathcal{U}$ is reset to the empty set, and the algorithm moves on to process the next element of $\mathcal{S}$.

\begin{lemma}
Given a set $\mathcal{S}$ containing exactly one representative of the equivalence classes of Seidel matrices of order $n$, the algorithm described above reports exactly one representative of every Seidel matrix of order $n+1$.
\end{lemma}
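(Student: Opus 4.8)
The plan is to prove two things: \emph{completeness} (every equivalence class of order-$(n+1)$ Seidel matrices is reported at least once) and \emph{soundness with uniqueness} (no class is reported more than once). The key structural fact is that any order-$(n+1)$ Seidel matrix $\widehat{S}$ is obtained from some order-$n$ Seidel matrix by deleting a single row and the corresponding column; deleting row/column $i$ yields a sub-Seidel matrix $S_i$ of order $n$, which is equivalent to a unique representative in $\mathcal{S}$. Thus every $\widehat{S}$ arises as an augmentation of at least one starting point $S\in\mathcal{S}$, and the enumeration over all $S\in\mathcal{S}$ together with all possible new rows/columns is exhaustive. This establishes completeness before any rejection step is applied.

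Next I would analyze the canonical-augmentation rejection rule, which enforces uniqueness. First I would argue that within the processing of a single fixed $S\in\mathcal{S}$, the comparison of canonical graphs against the current contents of $\mathcal{U}$ guarantees that no two equivalent matrices are both retained: by Lemma~\ref{l21}, $\widehat{S}_1\sim\widehat{S}_2$ if and only if $X(\widehat{S}_1)\cong X(\widehat{S}_2)$, which holds exactly when their canonical graphs coincide, so the canonical-graph test is an exact equivalence test. The subtler issue is ensuring that the \emph{same} class is not produced independently while processing two different parents $S,S'\in\mathcal{S}$, since $\mathcal{U}$ is reset between parents. This is precisely what the orbit condition ``$u_n\sim p$'' handles: it selects a canonical deletion position.

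The heart of the argument, and the step I expect to be the main obstacle, is verifying that the orbit-based acceptance condition selects exactly one parent per class. I would fix an order-$(n+1)$ class, represented by $\widehat{S}$, and let $p$ be the color-$0$ vertex receiving canonical label $0$; deleting the row/column corresponding to $p$ produces a canonically determined order-$n$ sub-Seidel matrix, hence a well-defined parent $S^\ast\in\mathcal{S}$. The acceptance rule retains an augmentation only when the newly appended vertex $u_n$ lies in the same orbit as $p$. I would argue, using Lemma~\ref{lemma2}, that $u_n\sim p$ in $X(\widehat{S})$ if and only if deleting the row/column for $u_n$ yields a sub-Seidel matrix equivalent to the one obtained by deleting the row/column for $p$; consequently the augmentations of $S^\ast$ that satisfy the orbit condition are exactly those giving rise to the fixed class, and augmentations of every other parent $S'\not\sim S^\ast$ fail the condition because their canonical deletion vertex lies in a different orbit from $u_n$. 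The delicate point is that $p$ is defined via nauty's canonical labeling, which respects the color classes (so the canonical label $0$ is always a color-$0$ vertex, as noted in the excerpt), and that the orbit of $p$ is an equivalence-invariant of $X(\widehat{S})$; I would lean on these facts to show the choice of parent is independent of which representative of the class we started from.

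Finally I would combine the pieces: completeness shows every class is reported at least once; the within-parent canonical-graph comparison shows each retained class appears at most once per parent; and the orbit condition shows each class is retained for exactly one parent $S^\ast$. Together these give that $\mathcal{T}$, the union over all parents of the retained augmentations, contains exactly one representative of every equivalence class of order-$(n+1)$ Seidel matrices, as claimed.
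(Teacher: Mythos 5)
Your overall architecture matches the paper's proof: completeness via the canonical-label-$0$ vertex determining a well-defined parent, cross-parent uniqueness via the orbit condition together with Lemma~\ref{lemma2}, and within-parent uniqueness via comparison of canonical graphs against $\mathcal{U}$. However, there is one genuine flaw at what you yourself identify as the heart of the argument: you claim, ``using Lemma~\ref{lemma2}, that $u_n\sim p$ \emph{if and only if} deleting the row/column for $u_n$ yields a sub-Seidel matrix equivalent to the one obtained by deleting the row/column for $p$.'' Lemma~\ref{lemma2} gives only the forward implication. The converse is false in general: two vertices whose deletion yields isomorphic subgraphs need not lie in the same orbit (these are the pseudo-similar vertices of Godsil and Kocay, which is precisely the reference attached to Lemma~\ref{lemma2} in the paper), and the analogous phenomenon occurs for row orbits of Seidel matrices. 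The paper is careful to use only the true direction.

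The damage is localized. Your uniqueness argument (that an accepted augmentation of a parent $S'$ forces $S'\sim S^\ast$, hence $S'=S^\ast$) uses only the correct direction of Lemma~\ref{lemma2} and is sound; this is exactly the paper's argument. What is left unsupported is the completeness half of your orbit analysis: the claim that at least one augmentation of $S^\ast$ yielding the fixed class survives the test currently rests on the false converse (and, as stated, your ``exactly those giving rise to the fixed class'' also over-claims, since many other classes have $S^\ast$ as canonical parent, and even within the fixed class an augmentation may land in a pseudo-similar row orbit and be rejected). The repair is the one the paper uses implicitly: take $\widehat{S}$ in the class, let $p$ carry canonical label $0$, delete that row to obtain a matrix equivalent to $S^\ast$, and consider the augmentation of $S^\ast$ that re-appends the deleted row last. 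This augmentation is equivalent to $\widehat{S}$ via an isomorphism carrying its new vertex $u_n$ onto $p$; since the canonical labeling and the orbit partition are isomorphism invariants, $u_n$ lies in the orbit of the canonical-label-$0$ vertex of the augmented graph, so the test passes. No converse of Lemma~\ref{lemma2} is needed anywhere.
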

\begin{proof}
Consider a Seidel matrix $S$ of order $n+1$, and compute a canonical labeling of $X(S)$. This reveals the vertex $u_i$ (or more precisely, the vertex orbit containing $u_i$) getting canonical label $0$ ($i\in \{0,\hdots, n\})$. Thus removing the $i$th row and column of $S$ yields a Seidel matrix of order $n$ which in turn is represented by some starting-point matrix $S_0\in \mathcal{S}$, so that $S\sim\widehat{S_0}$ for some augmentation. Hence $S$, up to equivalence, is reported.

Moreover, if $\widehat{S_1}$ and $\widehat{S_2}$ were equivalent Seidel matrices of order $n+1$, resulting from augmenting the starting-point matrices $S_1$ and $S_2$, respectively, then necessarily $S_1=S_2$. This follows because these matrices were not discarded, and therefore the most recently appended color $0$ vertices to their graphs $X(\widehat{S_1})$ and $X(\widehat{S_2})$ were in the same vertex orbit as the vertex with canonical label $0$. Thus by Lemma~\ref{lemma2} removing the vertices with canonical label $0$ yield isomorphic graphs. This in turn implies that $X(S_1)\cong X(S_2)$ and hence $S_1\sim S_2$. By the properties of $\mathcal{S}$ this can only happen if $S_1=S_2$. This establishes that equivalent matrices to $S$ are all reported from the same starting point matrix. However, (the canonical graphs of) these matrices are compared (with the elements of $\mathcal{U}$) before they are recorded and those which have already been found were discarded during the search. Therefore the Seidel matrices are reported exactly once.
\end{proof}

A nice feature of this generation algorithm and the rejection strategy involved is that it is possible to augment the elements of $\mathcal{S}$ in parallel. Also, the number of Seidel matrices recorded in the set $\mathcal{T}$ does not depend on the number of starting-point Seidel matrices; it only depends on the number of inequivalent augmented matrices $\widehat{S}$, stemming from the same starting-point matrix $S$.

The general idea behind the isomorph rejection strategy is to assign to the equivalence classes of Seidel matrices $\widehat{S}$ of order $n+1$ a preferred row orbit (which in turn by Lemma~\ref{lemma2} assigns a canonical parent object $S$ of order $n$), and record a matrix during the search only if its most recently appended row belongs to this row orbit. In the algorithm described in the beginning of Section~\ref{sss2} this row orbit was given by the vertex orbit of $X(S)$, which contained the vertex with canonical label $0$. However, since performing a canonical labeling is computationally expensive, alternative strategies for selecting the preferred row orbit (and thus the canonical parent) should be employed for certain equivalence classes. To this end, we recall a technique based on vertex invariants.

Let $S$ be a Seidel matrix of order $n$, and let $f$ be a vertex invariant of the vertex orbits of $X(S)$ with ordering $\prec$ on its range, and consider its values taken on the color $0$ vertices $u_i\in V(X(S)), i\in\{0,\dots,n-1\}$. Let $U$ be the set of vertices with unique invariant value, that is $U:=\{u_i : i\in\{0,\hdots, n-1\} ; \forall j\in\{0,\hdots n-1\}, j\neq i : f(u_i)\neq f(u_j)\}$. Clearly, the members of $U$ form one-element vertex orbits and hence by Lemma~\ref{lemma2} for every $u_i\in U$ the subgraphs induced by $V(X(S))\setminus \{u_i, v_i^{(0)},v_i^{(1)}\}$ are unique (up to isomorphism). If $U$ is nonempty, then let the preferred vertex orbit be the one-element set $\{u : u \in U ; \forall x\in U f(u) \prec f(x)\}$, otherwise if $U$ is empty, then we fall back to the strategy based on canonical labeling. In that case, the preferred vertex orbit remains the one which contains the vertex with canonical label $0$.

The sensitivity of $f$, viz.\ the ratio of the number of vertex orbits it distinguishes to the total number of vertex orbits is greatly depends on the choice of $f$: simple vertex invariants are computationally cheap, yet they might not be able to recognize one-element vertex orbits. Indeed, the vertex degree $f(u):=\mathrm{degree}(u)$, the most natural choice for vertex invariants, is unfortunately constant on the color classes of $X(S)$ and consequently useless for our purposes. We have used the following vertex invariant for Seidel matrices of order $n\geq 3$:
\begin{multline*}
f(u_i):=|\{(j,k) : j,k\in\{0,\hdots, n-1\} ; j\neq i, k\neq i, j\neq k ; \text{the induced subgraph on}\\
 \{u_i,u_j,u_k, v_i^{(0)}, v_i^{(1)},v_j^{(0)}, v_j^{(1)},v_k^{(0)}, v_k^{(1)}\} \text{ is isomorphic to } X(J_3-I_3)\}|,
\end{multline*}
Equivalently, $f(u_i)$ counts the number of $3\times 3$ sub-Seidel matrices intersecting the $i$th row and column of $S$, which are equivalent to $J_3-I_3$.

During the classification process once we encounter an augmented matrix $\widehat{S}$, we compute the values of this vertex invariant first, and then check if there are vertices with unique invariant value. If this is the case, then we discard $\widehat{S}$ unless its most recently augmented vertex $u_n \in V(X(\widehat{S}))$ is the one with smallest unique invariant value. If there are no vertices with unique invariant value, then it might be more efficient to try another, more powerful invariant first (e.g., one based on the $4\times 4$ sub-Seidel matrices) before falling back to canonical labeling. If none of the invariants used were able to find a unique vertex (perhaps because there were no such vertices at all), then we can refine the coloring of the vertices $u_i$ of $X(\widehat{S})$ based on the values taken by $f$. This extra information, if implemented carefully, could greatly improve the efficiency of \emph{nauty}. Another possible improvement is to utilize $\mathrm{Aut}(S)$ in canonical augmentation, see \cite{BR}, \cite[Section~4.2.3]{KO}.

Our C implementation classified the $13\times 13$ Seidel matrices in about a day in a cluster of $16$ computers having $12$ logical processors each. The computed data takes up some 120GB of storage in graph6 format. A pseudocode describing this algorithm is available in the Appendix.

\section{Algebraic properties of Seidel matrices}\label{sect3}
Recall that the spectrum of a Seidel matrix $S$ of order $n$, denoted by $\Lambda(S)$, contains its eigenvalues, $\lambda_0$, $\hdots$, $\lambda_{n-1}$, which are in turn the roots of the characteristic polynomial
\[p_S(\lambda):=\mathrm{det}(\lambda I-S)=\lambda^n+c_{n-2}\lambda^{n-2}+\hdots+c_0.\]
We note that the characteristic polynomial has leading coefficient $1$ (it is monic), and therefore all of its roots are algebraic integers.

Two Seidel matrices $S_1$ and $S_2$ are cospectral, and $S_2$ is called the cospectral mate of $S_1$, if $\Lambda(S_1)=\Lambda(S_2)$. Equivalent Seidel matrices are cospectral, but starting from $n=8$ examples of inequivalent cospectral matrices appear. In this section we describe results regarding the characteristic polynomial of Seidel matrices of order $n\leq 13$. In particular, we tabulate the number of cospectral Seidel matrices, the distribution of Seidel matrices with exactly $k\in\{1,\dots,n\}$ distinct eigenvalues, and the number of Seidel matrices with smallest eigenvalue $\lambda_{\min}\in\{-3,-5,-7\}$. It turns out, that all of these properties can be treated via inspecting the characteristic polynomial. 

There are various algorithms available for computing the characteristic polynomial. We decided to implement a classical algorithm due to the Faddeev--LeVerrier \cite{FL}, and in addition for verification and benchmarking purposes a fraction-free Gaussian elimination algorithm \cite[Chapter~9]{GEDDES}. We found that for these small matrices Faddeev--LeVerrier performed better.

Table~\ref{table1} summarizes our findings regarding the number of cospectral Seidel matrices up to equivalence. In the columns the size, the number of Seidel matrices up to equivalence, the number of distinct characteristic polynomials, the number of Seidel matrices having a cospectral mate, and the largest number of pairwise inequivalent cospectral Seidel matrices are given. It appears that the fraction of Seidel matrices with a cospectral mate increases at first, but from $n=12$ starts to decrease. This might suggest that cospectral matrices in general are rare. A similar phenomenon was observed in connection with graph adjacency matrices \cite{graphspectra2}, \cite{graphspectra1}.

\tiny
\begin{table}[!h]
\[\begin{array}{rrrrr}
\hline
n & \text{\#Seidel matrices} & \text{\#char.\ polys} & \text{\#with mate} & \text{max.\ family}\\
\hline
 1 &           1 &           1 &          0 &   1\\
 2 &           1 &           1 &          0 &   1\\
 3 &           2 &           2 &          0 &   1\\
 4 &           3 &           3 &          0 &   1\\
 5 &           7 &           7 &          0 &   1\\
 6 &          16 &          16 &          0 &   1\\
 7 &          54 &          54 &          0 &   1\\
 8 &         243 &         235 &         15 &   3\\
 9 &        2038 &        1824 &        400 &   4\\
10 &       33120 &       28488 &       8340 &  12\\
11 &     1182004 &      925108 &     437484 &  25\\
12 &    87723296 &    71366612 &   28175661 &  83\\
13 & 12886193064 & 10746314335 & 3722801719 & 174\\
\hline
\end{array}\]
\caption{The number of cospectral Seidel matrices for $n\leq 13$}\label{table1}
\end{table}
\normalsize

The computation of the numbers given in Table~\ref{table1} was carried out in a similar fashion to what is described in detail in \cite{graphspectra1}. The main difficulty arose from the fact that the number of characteristic polynomials we initially encountered were too numerous to store and process at once. Therefore at first we split up the Seidel matrices of order $13$ according to the value of $|\mathrm{det}(S)|\ (\mathrm{mod}\ p)$ for some prime number $p$. We chose $p=761$ and $p=383$ in two independent rounds of computation. Clearly, if $S_1$ and $S_2$ have distinct absolute determinant modulo $p$, then their characteristic polynomials are distinct. Therefore it is possible to investigate these smaller sets independently. Computing and evaluating these characteristic polynomials took about a week.

Another application of computing the characteristic polynomial is to determine the number of Seidel matrices which only have few distinct eigenvalues. The following simple result was used in this regard (see also \cite[p.~54]{HJ95}). Henceforth we denote by $\mathrm{deg}(p)$ the degree of the polynomial $p$; by $p'$ the derivative of $p$; and by $\mathrm{GCD}(p,q)$ the greatest common divisor of the polynomials $p$ and $q$.
\begin{lemma}\label{l31}
Let $A$ be a matrix of order $n\geq 1$ with characteristic polynomial $p_A(\lambda)$. Then $A$ has exactly $n-\mathrm{deg}(\mathrm{GCD}(p_A(\lambda),p_A'(\lambda)))$ distinct eigenvalues.
\end{lemma}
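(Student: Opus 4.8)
The plan is to compare the multiplicities of the roots of $p_A$ with those of $p_A'$, and to extract the number of distinct eigenvalues from the degree of their greatest common divisor. Since $A$ has order $n$, its characteristic polynomial $p_A$ is monic of degree $n$, and over the splitting field $\mathbb{C}$ it factors as $p_A(\lambda)=\prod_{i=1}^{k}(\lambda-\lambda_i)^{m_i}$, where $\lambda_1,\dots,\lambda_k$ are the $k$ distinct eigenvalues and $m_1,\dots,m_k\geq 1$ are their algebraic multiplicities, so that $\sum_{i=1}^{k}m_i=n$. The goal is then precisely to show that $\deg(\mathrm{GCD}(p_A,p_A'))=n-k$.

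First I would establish the key local fact: a root $\lambda_i$ of multiplicity $m_i$ in $p_A$ is a root of multiplicity exactly $m_i-1$ in $p_A'$. Writing $p_A(\lambda)=(\lambda-\lambda_i)^{m_i}q(\lambda)$ with $q(\lambda_i)\neq 0$, differentiation gives
\[
p_A'(\lambda)=(\lambda-\lambda_i)^{m_i-1}\bigl[m_i\,q(\lambda)+(\lambda-\lambda_i)\,q'(\lambda)\bigr],
\]
and the bracketed factor takes the value $m_i\,q(\lambda_i)$ at $\lambda=\lambda_i$. Since we work in characteristic $0$ and $m_i\geq 1$, this value is nonzero, so $\lambda_i$ is a root of $p_A'$ of multiplicity exactly $m_i-1$ (which is vacuously correct when $m_i=1$, meaning $\lambda_i$ is not a root of $p_A'$).

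Next I would assemble these local computations. The roots of $\mathrm{GCD}(p_A,p_A')$ are exactly the common roots of $p_A$ and $p_A'$, each appearing with the minimum of the two multiplicities; by the previous step this minimum at $\lambda_i$ is $\min(m_i,m_i-1)=m_i-1$. Hence $\mathrm{GCD}(p_A,p_A')=\prod_{i=1}^{k}(\lambda-\lambda_i)^{m_i-1}$ up to a scalar, whose degree is $\sum_{i=1}^{k}(m_i-1)=n-k$. Rearranging yields $k=n-\deg(\mathrm{GCD}(p_A,p_A'))$, as claimed.

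The only point requiring care, and the step I would highlight as the main obstacle, is ensuring that the greatest common divisor is computed consistently: $\mathrm{GCD}(p_A,p_A')$ is a priori defined over the field containing the coefficients of $A$, whereas the factorization above lives over $\mathbb{C}$. I would invoke the standard fact that the greatest common divisor of two polynomials is invariant (up to a nonzero scalar) under field extension, since the Euclidean algorithm producing it uses only the coefficient field. Consequently the degree computation carried out over $\mathbb{C}$ gives the same value as over the base field, and the characteristic-$0$ hypothesis guaranteeing $m_i\neq 0$ in the field is exactly what makes the derivative argument valid.
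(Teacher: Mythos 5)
Your proof is correct and takes essentially the same route as the paper's: factor $p_A$ over $\mathbb{C}$ into distinct linear factors, observe that each root of multiplicity $m_i$ appears with multiplicity exactly $m_i-1$ in $p_A'$, and conclude that $\deg(\mathrm{GCD}(p_A,p_A'))=\sum_i(m_i-1)=n-k$. The paper states the same computation more tersely; your extra remarks on the nonvanishing of the bracketed factor and the invariance of the GCD under field extension only make the argument more complete.
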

\begin{proof}
Assume that $A$ is a matrix of order $n\geq 1$ with exactly $k\geq 1$ distinct eigenvalues $\lambda_0, \dots, \lambda_{k-1}$ of multiplicity $r_0, \hdots, r_{k-1}$, $r_i\geq 1$ for $i\in\{0,\hdots,k-1\}$. Then, we have $p_A(\lambda)=(\lambda-\lambda_0)^{r_0}\cdots(\lambda-\lambda_{k-1})^{r_{k-1}}$, and $p_A'(\lambda)=(\lambda-\lambda_0)^{r_0-1}\cdots(\lambda-\lambda_{k-1})^{r_{k-1}-1}q(x)$, where $q(\lambda_i)\neq 0$ for every $i\in\{0,\hdots,k-1\}$, and the result follows.
\end{proof}

During the computation of the characteristic polynomials, additionally, we recorded the frequency of the number $k:=n-\mathrm{deg}(\mathrm{GCD}(p_S(\lambda),p_S'(\lambda)))$. Our results are shown in Table~\ref{table3x}.

\tiny
\begin{table}[ht]
\[\begin{array}{r|rrrrrrrrrrrrr}
{}_k\mkern-6mu\setminus\mkern-6mu{}^n       & 1 & 2 & 3 & 4 & 5 & 6 &  7 &  8 &   9 &    10 &     11 &       12 &          13\\
\hline
 1        & 1 & 0 & 0 & 0 & 0 & 0 &  0 &  0 &   0 &     0 &      0 &        0 &           0\\
 2        &   & 1 & 2 & 2 & 2 & 3 &  2 &  2 &   2 &     3 &      2 &        2 &           2\\
 3        &   &   & 0 & 0 & 1 & 2 &  0 &  2 &   3 &     4 &      0 &       10 &           1\\
 4        &   &   &   & 1 & 4 & 5 & 16 & 17 &  20 &    37 &     38 &       66 &          64\\
 5        &   &   &   &   & 0 & 2 &  8 & 20 &  55 &    56 &     92 &      166 &         184\\
 6        &   &   &   &   &   & 4 & 20 & 64 & 188 &   406 &    830 &     1487 &        2362\\ 
 7        &   &   &   &   &   &   &  8 & 46 & 218 &   696 &   2110 &     4620 &       12665\\
 8        &   &   &   &   &   &   &    & 92 & 652 &  3507 &  14336 &    47515 &      135038\\
 9        &   &   &   &   &   &   &    &    & 900 &  5960 &  41276 &   174522 &      721408\\
10        &   &   &   &   &   &   &    &    &     & 22451 & 217090 &  1848989 &    11633134\\
11        &   &   &   &   &   &   &    &    &     &       & 906230 &  7082344 &    70565315\\
12        &   &   &   &   &   &   &    &    &     &       &        & 78563575 &   817640532\\
13        &   &   &   &   &   &   &    &    &     &       &        &          & 11985482359\\
\end{array}\]
\caption{The number of Seidel matrices of order $n$ having exactly $k$ distinct eigenvalues}\label{table3x}
\end{table}
\normalsize

A few comments are in order. Since Seidel matrices have $0$ trace, any Seidel matrix of order $n\geq 2$ has both a negative and a positive eigenvalue. The Seidel matrices with exactly two distinct eigenvalues are called regular two-graphs in the literature, see \cite{GMS}, \cite{MSP}, \cite{T}, \cite{TTH}, and correspond to several sporadic examples of large set of equiangular lines. It is clear that the Seidel matrices $\pm(J-I)$ have exactly two distinct eigenvalues, but examples beyond these are somewhat rare. The Seidel matrices with exactly three distinct eigenvalues were studied first in \cite{GRWSN2} where they were enumerated up to $n\leq 12$, and most recently in \cite{GRWSN}, where their structural properties were investigated. For example, it is known that they do not exist for prime orders $p\equiv 3\ (\mathrm{mod}\ 4)$, see \cite[Theorem~5.9]{GRWSN2}. Close inspection of Table~\ref{table3x} reveals that the number of Seidel matrices with exactly $4$ distinct eigenvalues is not strictly increasing. This might suggest that they are rare enough to be interesting.

A third application of the characteristic polynomial is that it characterizes positive semidefinite matrices. Recall that a real symmetric matrix is called positive semidefinite, if all of its eigenvalues are nonnegative. The following is a slight reformulation of \cite[Corollary~7.2.4]{HJ95}.

\begin{lemma}\label{l32}
A real symmetric matrix $A=A^T$ of order $n$ is positive semidefinite if and only if all the coefficients of its characteristic polynomial $p_A(\lambda)=\lambda^n+c_{n-1}\lambda^{n-1}+\hdots+c_0$ alternate in sign, that is, they satisfy $(-1)^{n-i}c_i\geq 0$ for every $i\in\{0,\dots,n-1\}$.
\end{lemma}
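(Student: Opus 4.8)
The plan is to prove both directions by relating the sign pattern of the coefficients to the signs of the eigenvalues, exploiting that the coefficients are (up to sign) the elementary symmetric functions of the eigenvalues. Since $A=A^T$ is real symmetric, all eigenvalues $\lambda_0,\dots,\lambda_{n-1}$ are real, so we may legitimately speak of them being nonnegative or not. First I would recall the factorization $p_A(\lambda)=\prod_{j=0}^{n-1}(\lambda-\lambda_j)$ and expand it to express each coefficient as $c_i=(-1)^{n-i}e_{n-i}(\lambda_0,\dots,\lambda_{n-1})$, where $e_k$ denotes the $k$th elementary symmetric polynomial. Consequently the condition $(-1)^{n-i}c_i\geq 0$ is equivalent to $e_{n-i}(\lambda_0,\dots,\lambda_{n-1})\geq 0$ for every $i$, i.e.\ to $e_k\geq 0$ for all $k\in\{1,\dots,n\}$.

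With this reformulation, the forward direction is immediate: if $A$ is positive semidefinite then every $\lambda_j\geq 0$, and an elementary symmetric polynomial is a sum of products of nonnegative reals, hence $e_k\geq 0$ for all $k$, giving the desired sign alternation. For the converse I would argue by contraposition. Suppose $A$ is not positive semidefinite, so some eigenvalue is strictly negative; I would then show that at least one $e_k$ must be strictly negative, contradicting the sign condition. The cleanest route is to consider the polynomial $p_A(-\mu)=(-1)^n\prod_{j=0}^{n-1}(\mu+\lambda_j)$ evaluated (or compared) at the point where the negative eigenvalues manifest, or equivalently to note that if all $e_k\geq 0$ then $p_A(\lambda)=\lambda^n+\sum_{i=0}^{n-1}c_i\lambda^i$ has no negative real root: indeed for $\lambda<0$ one checks $(-1)^n p_A(\lambda)=\sum_{k=0}^{n}e_k\,|\lambda|^{\,n-k}>0$ since every summand is nonnegative and the leading one ($e_0=1$ times $|\lambda|^n$) is strictly positive. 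Hence no $\lambda_j$ can be negative, so $A$ is positive semidefinite.

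I expect the main obstacle to be bookkeeping the signs and indices correctly rather than any conceptual difficulty: one must be careful that the problem's indexing runs $i\in\{0,\dots,n-1\}$ with leading coefficient $c_n=1$ implicit, and that the identification $c_i=(-1)^{n-i}e_{n-i}$ carries the right power of $-1$ so that the stated condition $(-1)^{n-i}c_i\geq 0$ really does translate to $e_{n-i}\geq 0$. A secondary subtlety is the strictness in the converse: to rule out a zero eigenvalue masquerading as a negative one is unnecessary (zero eigenvalues are permitted), but to conclude genuine nonnegativity I rely on the fact that the leading term $|\lambda|^n$ in $(-1)^n p_A(\lambda)$ is strictly positive for $\lambda<0$, which forces any real root to be $\geq 0$. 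Since the paper explicitly labels this a reformulation of \cite[Corollary~7.2.4]{HJ95}, I would keep the write-up short, citing that corollary for the underlying positive-semidefiniteness characterization and supplying only the elementary-symmetric-function translation as the new content.
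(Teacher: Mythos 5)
Your proposal is correct and follows essentially the same route as the paper: one direction observes that a polynomial whose coefficients alternate in sign (together with the reality of the eigenvalues of a symmetric matrix) cannot have a negative root, and the other reads off the coefficient signs from the factorization $p_A(\lambda)=\prod_j(\lambda-\lambda_j)$ over nonnegative eigenvalues. The only cosmetic difference is that you make the Vieta identity $c_i=(-1)^{n-i}e_{n-i}$ explicit where the paper appeals to ``an inductive argument'' for the strict alternation of signs of $\prod_{i=0}^{m-1}(\lambda-\lambda_i)$; both amount to the same computation.
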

\begin{proof}
On the one hand, it is easy to see that if the coefficients of $p_A(\lambda)$ alternate in sign, then such a polynomial cannot have a negative root. Therefore, since $A=A^T$, all other roots are nonnegative, and hence $A$ is positive semidefinite. On the other hand, if $A$ is positive semidefinite with $m\leq n$ positive eigenvalues $\lambda_0, \dots, \lambda_{m-1}$, then an inductive argument shows that the signs of the coefficients of $\prod_{i=0}^{m-1}(\lambda-\lambda_i)$ alternate strictly; multiplying by $\lambda^{n-m}$ gives $p_A(\lambda)$.
\end{proof}
Here we are interested in the number of Seidel matrices $S$ with smallest eigenvalue $\lambda_{\min}\geq x\in\{-3,-5,-7\}$. This question can be decided via Lemma~\ref{l32} by considering the matrix $S-xI$ and determining whether it is positive semidefinite. While it is possible to use a complete classification of Seidel matrices of order $n$, and then check this property for every matrices, in certain cases, for example, when $x=-3$, it is faster to generate them as described in Section~\ref{sect2}, but discarding along the way the Seidel matrices with $\lambda_{\min}< -3$. The reason for this is interlacing: if a Seidel matrix has smallest eigenvalue $\lambda_{\min}$, then all of its sub-Seidel matrices have smallest eigenvalue at least $\lambda_{\min}$. We state this result as follows, see \cite[Theorem~4.3.28]{HJ95}.
\begin{theorem}\label{tint}
Let $A=A^T$ be a real symmetric matrix of order $n$, partitioned as $\bigl[\begin{smallmatrix}B & C\\ C^T & D\end{smallmatrix}\bigr]$ where $B=B^T$ is of order $m\leq n$. Let the eigenvalues of $A$ be $\lambda_0\leq \dots\leq \lambda_{n-1}$, and let the eigenvalues of $B$ be $\mu_0\leq\dots\leq \mu_{m-1}$. Then $\lambda_i\leq \mu_i\leq \lambda_{i+n-m}$, $i\in\{0,\dots,m-1\}$.
\end{theorem}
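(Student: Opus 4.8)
The plan is to reduce everything to the spectral theorem for real symmetric matrices together with a single subspace-dimension count. First I would fix an orthonormal eigenbasis $w_0,\dots,w_{n-1}$ of $A$ with $Aw_j=\lambda_jw_j$ and an orthonormal eigenbasis $z_0,\dots,z_{m-1}$ of $B$ with $Bz_j=\mu_jz_j$. The crucial observation is that $B$ sits inside $A$ as a quadratic form: identifying $\mathbb{R}^m$ with the coordinate subspace $W=\{(x^T,0)^T : x\in\mathbb{R}^m\}\subseteq\mathbb{R}^n$, every $y=(x^T,0)^T\in W$ satisfies $y^TAy=x^TBx$ and $y^Ty=x^Tx$ by the block partition, so the Rayleigh quotient of $A$ restricted to $W$ agrees with the Rayleigh quotient of $B$. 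I would also record the two elementary facts that, for a vector lying in the span of the eigenvectors belonging to the $t$ smallest eigenvalues of $A$, its Rayleigh quotient is at most the largest of those eigenvalues, and dually for the largest eigenvalues; both are immediate upon expanding in the eigenbasis.

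For the lower bound $\lambda_i\le\mu_i$ I would take $\widetilde U\subseteq W$ to be the image of $\mathrm{span}\{z_0,\dots,z_i\}$, of dimension $i+1$, and $V=\mathrm{span}\{w_i,\dots,w_{n-1}\}\subseteq\mathbb{R}^n$, of dimension $n-i$. Since $\dim\widetilde U+\dim V=n+1>n$, there is a nonzero $y=(x^T,0)^T\in\widetilde U\cap V$. Membership in $\widetilde U$ forces the Rayleigh quotient of $B$ at $x$ to be at most $\mu_i$, while membership in $V$ forces the Rayleigh quotient of $A$ at $y$ to be at least $\lambda_i$; as these two Rayleigh quotients coincide, $\lambda_i\le\mu_i$ follows. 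The upper bound $\mu_i\le\lambda_{i+n-m}$ is completely symmetric: take $\widetilde U$ to be the image of $\mathrm{span}\{z_i,\dots,z_{m-1}\}$, of dimension $m-i$, and $V=\mathrm{span}\{w_0,\dots,w_{i+n-m}\}$, of dimension $i+n-m+1$; again the dimensions sum to $n+1$, a nonzero common vector exists, and the two Rayleigh-quotient bounds combine to give $\mu_i\le\lambda_{i+n-m}$. Equivalently, one may package both halves through the Courant--Fischer min-max characterization of eigenvalues, restricting the outer optimization to subspaces of $W$.

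I expect the only real care to be in the bookkeeping of the subspace dimensions and index ranges rather than in any deep idea: one must check that the chosen spans have the stated dimensions at the extreme indices $i=0$ and $i=m-1$, that the strict inequality $n+1>n$ indeed guarantees a nontrivial intersection via $\dim(\widetilde U\cap V)\ge\dim\widetilde U+\dim V-n$, and that the common vector $y$ is genuinely nonzero so that its Rayleigh quotient is defined. Everything else --- the agreement of the two Rayleigh quotients on $W$ and the eigenbasis estimates --- is routine once the embedding $\mathbb{R}^m\hookrightarrow\mathbb{R}^n$ is in place, and no appeal beyond the spectral theorem is needed. I would finally remark that, although the statement is phrased for the leading principal submatrix $B$, the identical argument covers an arbitrary principal submatrix, since any such submatrix is again the quadratic form of $A$ restricted to a coordinate subspace; this is exactly the form in which the \emph{interlacing} consequence for sub-Seidel matrices is applied above.
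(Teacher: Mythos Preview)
Your argument is correct and is precisely the standard Rayleigh-quotient/dimension-count proof of Cauchy interlacing. Note, however, that the paper does not actually supply its own proof of Theorem~\ref{tint}: it merely cites \cite[p.~246]{HJ95}, where the argument given is essentially the Courant--Fischer approach you outline (intersect an $(i{+}1)$-dimensional eigenspace of $B$ embedded in the coordinate subspace $W$ with an $(n{-}i)$-dimensional eigenspace of $A$, and compare Rayleigh quotients). So what you have written is not an alternative route but a fleshed-out version of the reference the paper defers to; there is nothing to add or correct.
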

The proof of Theorem~\ref{tint}, along with the discussion of the case of equality, can be found in \cite[p.~246]{HJ95}. See also \cite{INT} for several applications of eigenvalue interlacing in graph theory. We remark that if $\lambda$ is an eigenvalue of $S$ of order $n$, then by the Ger\v{s}gorin circle theorem \cite[Theorem~6.1.1]{HJ95}, we have $|\lambda|\leq n-1$, and equality holds for the Seidel matrices $\pm(J-I)$.

Our findings are tabulated in Table~\ref{table3v2}. The focus on negative odd eigenvalues was motivated by the fact that large equiangular line systems correspond to Seidel matrices with such eigenvalues \cite{LS}.

\tiny
\begin{table}[ht]
\begin{tabular}{rrrrrrr}
\hline
$n$ & $\lambda_{\min}\geq-3$ & $\lambda_{\min}=-3$ & $\lambda_{\min}\geq-5$ & $\lambda_{\min}= -5$ & $\lambda_{\min}\geq -7$ & $\lambda_{\min} = -7$\\
\hline
 3 &     2 &    0 &         2 &        0 &           2 &       0\\
 4 &     3 &    1 &         3 &        0 &           3 &       0\\
 5 &     5 &    1 &         7 &        0 &           7 &       0\\
 6 &     9 &    4 &        16 &        1 &          16 &       0\\
 7 &    16 &    9 &        51 &        2 &          54 &       0\\
 8 &    25 &   23 &       215 &        8 &         243 &       1\\
 9 &    40 &   38 &      1601 &       33 &        2033 &       2\\
10 &    58 &   56 &     21249 &      306 &       33027 &      10\\
11 &    75 &   73 &    511275 &     6727 &     1177470 &      78\\
12 &    96 &   94 &  19032270 &   219754 &    87080911 &    1362\\
13 &   108 &  106 & 965697139 & 11295930 & 12660901908 &   55572\\
\hline
\end{tabular}
\caption{The number of Seidel matrices with eigenvalues $-3$, $-5$, and $-7$}\label{table3v2}
\end{table}
\normalsize

Inspection of Table~\ref{table3v2} reveals that while Seidel matrices with $\lambda_{\min}\geq-3$ or $\lambda_{\min}\leq-7$ are very rare, there is an abundance of Seidel matrices with $\lambda_{\min}\geq -5$ for $n\leq 13$. Based on the growth rate of these numbers, we rather conservatively estimate that there are some $3\times 10^{10}$ Seidel matrices of order $n=15$ with $\lambda_{\min}=-5$.

The Seidel matrices with these properties will serve as starting-point matrices for the search described in the next section where we study Seidel matrices with exactly three distinct eigenvalues.

\section{Seidel matrices with exactly three distinct eigenvalues}\label{sect4}
Since regular two-graphs are somewhat rare, one might wonder what other combinatorial objects could be useful to the construction of large set of equiangular lines. Inspection of these large sets reveals that most of them correspond to a Seidel matrix with exactly (two or) three distinct eigenvalues. This observation motivated the search initiated in \cite{GRWSN2}, where based on the full classification of Seidel matrices of order $n\leq 12$, enumeration of Seidel matrices with exactly three distinct eigenvalues was given. Here we extend that work slightly further, and enumerate all such Seidel matrices for $n\leq 23$. The summary of our results is given in Table~\ref{tableno3}.

\tiny
\begin{table}[ht]
\begin{tabular}{c|cccccccccccccccccccccc}
\hline
$n$& 3 & 4 & 5 & 6 & 7 & 8 & 9 & 10 & 11 & 12 & 13 & 14 & 15 & 16 & 17 & 18 & 19 & 20 & 21 & 22 & 23 & 24\\ 
\# & 0 & 0 & 1 & 2 & 0 & 2 & 3 & 4  &  0 & 10 &  1 &  2 &  6 &  4 &  1 & 12 &  0 & 30 &  6 &  2 &  0 & $\geq 20$\\
\hline
\end{tabular}
\caption{The number of Seidel matrices with exactly three distinct eigenvalues}\label{tableno3}
\end{table}
\normalsize

There are several general methods to construct Seidel matrices with exactly three distinct eigenvalues. The basic result is that if the ambient graph $\Gamma(S)$ is a connected regular graph with adjacency matrix $A$, where $A$ has exactly $r\in\{3,4\}$ distinct eigenvalues, then the Seidel matrix $S:=J-I-2A$ has at most $r$ distinct eigenvalues. Other, more involved constructions based on Seidel matrices with exactly two distinct eigenvalues are described in \cite[Proposition~5.11]{GRWSN2}, \cite[Lemma~5.12]{GRWSN2}, and \cite[Theorem~5.16]{GRWSN2}. Further sporadic examples are known \cite{TTH}. In the following we recall the result from \cite[Proposition~5.11]{GRWSN2} to demonstrate that such matrices exist for every composite order $n>4$. We use the short-hand notation $[\lambda]^a \in \Lambda(S)$ for $a\geq 1$ to indicate that the eigenvalue $\lambda$ has multiplicity at least $a$; the notation $[\lambda]^0\in\Lambda(S)$ carries no information about $\lambda$ and should be ignored. We denote by $A\otimes B$ the Kronecker product of the matrices $A$ and $B$.
\begin{lemma}\label{l41a}
Let $S$ be a Seidel matrix of order $b\geq 3$ with spectrum $\Lambda(S)=\{[\lambda_0]^{b-c},[\lambda_1]^c\}$ for some $c\geq 1$ with $\lambda_0,\lambda_1\neq1$. Then for $a\geq 1$, $S':=J_a\otimes (S-I_b)+I_{ab}$ has spectrum $\Lambda(S')=\{[a(\lambda_0-1)+1]^{b-c}, [1]^{(a-1)b}, [a(\lambda_1-1)+1]^{c}\}$.
\end{lemma}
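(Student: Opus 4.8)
The plan is to compute the spectrum of $S'$ directly by exploiting the eigenstructure of the Kronecker product $J_a\otimes(S-I_b)$. First I would recall that the eigenvalues of $J_a$ are $[a]^1$ and $[0]^{a-1}$: the all-ones vector $\mathbf{1}_a$ is an eigenvector with eigenvalue $a$, while its orthogonal complement (dimension $a-1$) is the eigenspace for $0$. The key fact about Kronecker products is that if $u$ is an eigenvector of $J_a$ with eigenvalue $\mu$ and $w$ is an eigenvector of $S-I_b$ with eigenvalue $\nu$, then $u\otimes w$ is an eigenvector of $J_a\otimes(S-I_b)$ with eigenvalue $\mu\nu$. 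Since $S$ is symmetric of order $b$, it has an orthonormal eigenbasis, and by hypothesis its eigenvalues are $\lambda_0$ (multiplicity $b-c$) and $\lambda_1$ (multiplicity $c$); hence $S-I_b$ has eigenvalues $\lambda_0-1$ and $\lambda_1-1$ with the same respective multiplicities.

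Next I would assemble the eigenvalues of $J_a\otimes(S-I_b)$ by pairing each eigenvalue of $J_a$ with each eigenvalue of $S-I_b$, tracking multiplicities. Pairing $\mu=a$ with $\nu=\lambda_0-1$ gives $a(\lambda_0-1)$ with multiplicity $b-c$; pairing $\mu=a$ with $\nu=\lambda_1-1$ gives $a(\lambda_1-1)$ with multiplicity $c$; and pairing $\mu=0$ (multiplicity $a-1$) with either eigenvalue of $S-I_b$ gives $0$ with total multiplicity $(a-1)b$. This accounts for all $ab$ eigenvalues. Finally, since $S'=J_a\otimes(S-I_b)+I_{ab}$, adding the identity shifts every eigenvalue by $1$, yielding eigenvalues $a(\lambda_0-1)+1$, $a(\lambda_1-1)+1$, and $1$ with the stated multiplicities $b-c$, $c$, and $(a-1)b$, respectively. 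This matches $\Lambda(S')$ exactly.

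The only point requiring a little care is confirming that $S'$ is itself a genuine Seidel matrix, which justifies speaking of its spectrum in this context; this follows because $S-I_b$ has $-1$ on the diagonal and $\pm1$ off-diagonal, so $J_a\otimes(S-I_b)$ has all entries in $\{-1,1\}$, and adding $I_{ab}$ restores $0$ on the diagonal while leaving the symmetric $\pm1$ off-diagonal pattern intact. I expect the main (and essentially only) obstacle to be purely bookkeeping: verifying that the multiplicities sum correctly to $ab$ and that the hypotheses $\lambda_0,\lambda_1\neq 1$ guarantee the three listed eigenvalues of $S'$ are genuinely distinct from $1$ (so the multiplicity labels are not accidentally merged), while the convention $[\,\cdot\,]^0$ handles the degenerate case $c=b$. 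No deep argument is needed beyond the standard Kronecker eigenvalue formula and the diagonalizability of symmetric matrices.
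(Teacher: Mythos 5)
Your proposal is correct and follows exactly the route the paper takes: the paper's proof is a one-line appeal to $\Lambda(J_a)=\{[0]^{a-1},[a]^1\}$ together with the standard Kronecker-product eigenvalue rule and the shift by $I_{ab}$, which is precisely what you spell out in detail. The additional checks you include (that $S'$ is a genuine Seidel matrix and that $\lambda_0,\lambda_1\neq 1$ keeps the eigenvalues distinct from $1$) are sound and consistent with the paper's implicit assumptions.
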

\begin{proof}
Since $S'$ is a Seidel matrix, the claim follows immediately after noting that $\Lambda(J_a)=\{[0]^{a-1},[a]^1\}$.
\end{proof}
For $a\geq2,b\geq 3$ we use the notation $K(a,b):=J_a\otimes (J_b-2I_b)+I_{ab}$. By Lemma~\ref{l41a} $\Lambda(K(a,b))=\{[-2a+1]^{b-1},[1]^{ab-b},[ab-2a+1]^1\}$, and therefore the matrices $K(a,b)$ are examples of Seidel matrices with exactly three distinct eigenvalues. To compile a table listing all the potential spectrum of such matrices, we derive a simple property first.
\begin{lemma}\label{l41}
Let $S$ be a Seidel matrix of order $n=a+b+c$ with exactly three distinct eigenvalues given by $\Lambda(S)=\{[\lambda]^a,[\mu]^b,[\nu]^c\}$. Then necessarily
\[a=\frac{n(n-1+\mu\nu)}{(\lambda-\mu)(\lambda-\nu)},\qquad b=\frac{n(n-1+\lambda\nu)}{(\mu-\lambda)(\mu-\nu)},\qquad c=\frac{n(n-1+\lambda\mu)}{(\nu-\lambda)(\nu-\mu)}.\]
In particular, the quantities above are positive integers.
\end{lemma}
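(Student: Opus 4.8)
The plan is to exploit the fact that the multiplicities $a,b,c$ are determined by the first three power sums of the eigenvalues, which for a Seidel matrix can be read off directly from its combinatorial structure. Write $p_k:=\mathrm{tr}(S^k)=a\lambda^k+b\mu^k+c\nu^k$. For $k=0$ we have $p_0=a+b+c=n$. Since $S$ has zero diagonal, $p_1=\mathrm{tr}(S)=0$. The key computation is $p_2$: because $(S^2)_{ii}=\sum_{j}S_{ij}S_{ji}=\sum_j S_{ij}^2$, and every off-diagonal entry of $S$ is $\pm1$ while the diagonal is $0$, each diagonal entry of $S^2$ equals $n-1$, so $p_2=\mathrm{tr}(S^2)=n(n-1)$. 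This yields the three linear equations
\[a+b+c=n,\qquad a\lambda+b\mu+c\nu=0,\qquad a\lambda^2+b\mu^2+c\nu^2=n(n-1).\]

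Next I would regard these as a linear system in the unknowns $(a,b,c)$ with coefficient matrix the Vandermonde matrix $V$ built from $\lambda,\mu,\nu$, and right-hand side $(n,0,n(n-1))^{T}$. Since the three eigenvalues are distinct, $\det V=(\mu-\lambda)(\nu-\lambda)(\nu-\mu)\neq0$, so the system has a unique solution, which I would extract by Cramer's rule. Replacing the first column of $V$ by the right-hand side and expanding the resulting determinant along that column gives $n(\nu-\mu)(\mu\nu+n-1)$; dividing by $\det V$ and cancelling the common factor $(\nu-\mu)$ produces
\[a=\frac{n(n-1+\mu\nu)}{(\lambda-\mu)(\lambda-\nu)}.\]
The formulas for $b$ and $c$ then follow from the cyclic symmetry of the system under the simultaneous permutation $(\lambda,a)\mapsto(\mu,b)\mapsto(\nu,c)\mapsto(\lambda,a)$, so I would carry out the determinant computation only once.

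Finally, the assertion that these quantities are positive integers requires no extra work: by hypothesis $a$, $b$, $c$ are the multiplicities of the three distinct eigenvalues of $S$, hence each is a positive integer, and the displayed formulas merely express these multiplicities in closed form. I do not anticipate a genuine obstacle here; the only points demanding care are the verification that $\mathrm{tr}(S^2)=n(n-1)$ (this is exactly where the Seidel structure enters, rather than that of an arbitrary symmetric matrix) and the bookkeeping in the Cramer's-rule determinant so that the answer emerges in the stated symmetric form.
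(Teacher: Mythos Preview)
Your proof is correct and follows essentially the same approach as the paper: the paper's proof simply records the three trace equations $a+b+c=n$, $\mathrm{tr}(S)=0$, and $\mathrm{tr}(S^2)=n(n-1)$ and declares the result follows, while you fill in the details by explaining why $\mathrm{tr}(S^2)=n(n-1)$ and solving the Vandermonde system explicitly via Cramer's rule. There is no substantive difference in strategy.
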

\begin{proof}
This follows from the system of equations $\mathrm{tr}(S)=a\lambda+b\mu+c\nu=0$, $\mathrm{tr}(S^2)=a\lambda^2+b\mu^2+c\nu^2=n(n-1)$, and $a+b+c=n$.
\end{proof}
A hypothetical spectrum $\Lambda(S)=\{[\lambda]^a,[\mu]^b,[\nu]^c\}$ is called feasible, if the integrality conditions of Lemma~\ref{l41} are met. There are further necessary conditions on the spectrum, some are general for all Seidel matrices, while others only apply to the three distinct eigenvalue case. We list the most important results below:
\begin{itemize}
\item The determinant of any Seidel matrix must satisfy $\mathrm{det}(S)\equiv 1-n\ (\mathrm{mod}\ 4)$, see \cite[Corollary~3.6]{GRWSN2}. This condition eliminates e.g.\ the case $\{[-\sqrt3]^1,[0]^1,[\sqrt3]^1\}$.
\item The multiplicity of any even eigenvalue must be $1$, see \cite[Theorem~2.2]{GRWSN2}. This eliminates $\{[-7]^1,[-1]^3,[2]^5\}$.
\item The following quantity $(n-1)(\lambda+\mu+\nu)+\lambda\mu\nu-n^2-n-2$ must be doubly even for Seidel matrices with exactly three distinct eigenvalues \cite[Corollary~5.3]{GRWSN2}. This eliminates $\{[1]^{8}, [-2+3\sqrt{3}]^3,[-2-3\sqrt{3}]^3\}$.
\item Finally, we note that a Seidel matrix with exactly three distinct eigenvalues necessarily has an integer eigenvalue \cite[Corollary~5.5]{GRWSN2}. This eliminates potential cases with eigenvalues having a cubic minimal polynomial. If there are any quadratic irrational eigenvalues, then they come in conjugate pairs having equal multiplicity, and their minimal polynomial must be monic. This eliminates $\{[3]^8,[-4+\sqrt{21}/3]^3,[-4-\sqrt{21}/3]^3\}$.
\end{itemize}
These necessary conditions, along with the Ger\v{s}gorin bound $|\lambda|\leq n-1$ allow us to compile a table listing the potential spectrum of hypothetical Seidel matrices. In Tables~\ref{table9x} and \ref{table10x} we do this for each $n\leq 24$, where the spectra satisfying all these necessary conditions is displayed, along with the number of inequivalent matrices found, with the convention in Table~\ref{table9x} that $\lambda < 0 \leq \mu\leq \nu$. Therefore if $S$ and its complement $-S$ have distinct spectra, then only one of those two cases is indicated.
\tiny
\renewcommand{\arraystretch}{1.2}
\begin{table}[ht]
\[\begin{array}{rrrrcc|rrrrcc}
\hline
n & [\lambda]^a & [\mu]^b & [\nu]^c & \# & \text{Remark} & n & [\lambda]^a & [\mu]^b & [\nu]^c & \# & \text{Remark}\\
\hline
 6 & [-3]^{2} & [1]^{3} & [3]^{1}   &   1    & K(2,3) & 20 & [-9]^{3} & [1]^{16} & [11]^{1}    & 1 & K(5,4) \\
 8 & [-3]^{3} & [1]^{4} & [5]^{1}   &   1    & K(2,4) & 20 & [-7]^{5} & [1]^{10} & [5]^{5}     &  4 & \{[-3]^5,[3]^5\} \\
 9 & [-5]^{2} & [1]^{6} & [4]^{1}   &   1    & K(3,3) & 20 & [-7]^{4} & [1]^{15} & [13]^{1}    & 1 & K(4,5) \\
 9 & [-3]^{4} & [0]^{1} & [3]^{4}   &   1    & \{[-3]^5,[3]^5\} & 20 & [-5]^{8} & [1]^{5} & [5]^{7} & 8 & \{[-5]^{13},[5]^{13}\} \\
10 & [-3]^{4} & [1]^{5} & [7]^{1}   &   1    & K(2,5) & 20 & [-3]^{9} & [1]^{10} & [17]^{1}    & 1 & K(2,10) \\
12 & [-7]^{2} & [1]^{9} & [5]^{1}   &   1    & K(4,3) & 21 & [-13]^{2} & [1]^{18} & [8]^{1}    &  1 & K(7,3) \\
12 & [-5]^{3} & [1]^{8} & [7]^{1}   &   1    & K(3,4) & 21 & [-5]^{6} & [1]^{14} & [16]^{1}    & 1 & K(3,7) \\
12 & [-3]^{6} & [1]^{3} & [5]^{3}   &   1    & \{[-3]^{10},[5]^6\} & 21 & [-3]^{14} & [0]^{1} & [7]^{6}     & 1 &  \\
12 & [-3]^{5} & [1]^{6} & [9]^{1}   &   1    & K(2,6) & 21 & [-3]^{14} & [5]^{6} & [12]^{1}    & 0 &  \\
14 & [-3]^{6} & [1]^{7} & [11]^{1}  &   1    & K(2,7) & 22 & [-3]^{10} & [1]^{11} & [19]^{1}   & 1 & K(2,11) \\
15 & [-9]^{2} & [1]^{12} & [6]^{1}  &   1    & K(5,3) & 24 & [-15]^{2} & [1]^{21} & [9]^{1}    & 1 & K(8,3) \\
15 & [-5]^{4} & [1]^{10} & [10]^{1} &   1    & K(3,5) & 24 & [-11]^{3} & [1]^{20} & [13]^{1}   & 1 & K(6,4) \\
15 & [-3]^{9} & [2]^{1} & [5]^{5}   &   1    & \{[-3]^{10},[5]^6\} & 24 & [-7]^{7} & [1]^{9} & [5]^{8}      & ? &  \\
16 & [-7]^{3} & [1]^{12} & [9]^{1}  &   1    & K(4,4) & 24 & [-7]^{6} & [1]^{15} & [9]^{3}     & \geq3 & \text{incomplete search} \\
16 & [-3]^{8} & [1]^{6} & [9]^{2}   &   0     &        & 24 & [-7]^{5} & [1]^{18} & [17]^{1}    & 1 & K(4,6) \\
16 & [-3]^{7} & [1]^{8} & [13]^{1}  &   1    & K(2,8) & 24 & [-5]^{10} & [1]^{8} & [7]^{6}     & ? &  \\
18 & [-11]^{2} & [1]^{15} & [7]^{1} &   1    & K(6,3) & 24 & [-5]^{8} & [1]^{14} & [13]^{2}    & 0 & \\
18 & [-9]^{3} & [1]^{9} & [3]^{6}   &   0     &        & 24 & [-5]^{7} & [1]^{16} & [19]^{1}    & 1  & K(3,8) \\
18 & [-5]^{6} & [1]^{9} & [7]^{3}   &   1    &        & 24 & [-5]^{11} & [3]^{9} & [7]^{4}     & ? &   \\
18 & [-5]^{5} & [1]^{12} & [13]^{1} &   1    & K(3,6) & 24 & [-3]^{16} & [1]^{3} & [9]^{5}     & 0 &   \\
18 & [-3]^{8} & [1]^{9} & [15]^{1}  &   1    & K(2,9) & 24 & [-3]^{11} & [1]^{12} & [21]^{1}   & 1 & K(2,12)  \\
18 & [-3]^{11} & [3]^{5} & [9]^{2}  &   1    &        & 24 & [-3]^{16} & [3]^{5} & [11]^{3}    & 0 &  \\
   &          &         &           &  &  & 24 & [-3]^{17} & [5]^{3} & [9]^{4} & 1 &  \{[-3]^{21}, [9]^7\}\\
\hline
\end{array}\]
\caption{Seidel matrices with exactly three distinct integer eigenvalues}\label{table9x}
\end{table}
\renewcommand{\arraystretch}{1}
\normalsize

Before describing the search in detail, we comment on Tables~\ref{table9x} and \ref{table10x}. The data in Table~\ref{table9x} shows that within the range of $n\leq 24$ most examples come from the matrices $K(a,b)$. If a certain case can be obtained by application of \cite[Lemma~5.12]{GRWSN2} or \cite[Theorem~5.16]{GRWSN2}, then the spectrum of the corresponding regular two-graph (or its complement) is noted. The cases where the smallest (or largest) eigenvalue was $-3$ (or $3$, respectively) could be very easily classified (cf.\ Table~\ref{table3v2}), while those with smallest eigenvalue $-5$ required more efforts. The cases with a high eigenvalue multiplicity could be more easily treated than those where these multiplicities are approximately the same. In particular, addressing the existence of Seidel matrices with spectrum $\{[-7]^{7}, [1]^9, [5]^8\}$ is out of reach with these techniques, as the pruning conditions (see the next paragraphs) in these cases are very weak, and a search would essentially be as difficult as the complete classification of Seidel matrices of order $n\geq 14$.
\tiny
\renewcommand{\arraystretch}{1.2}
\begin{table}[ht]
\[\begin{array}{rrrrcc}
\hline
n & [\lambda]^a & [\mu]^b & [\nu]^c & \# & \text{Remark}\\
\hline
 5 & [0]^{1} & [\sqrt{5}]^{2} & [-\sqrt{5}]^{2}               & 1 & \text{From $\{[-\sqrt{5}]^3,[\sqrt{5}]^3\}$ via \cite[Lemma~5.12]{GRWSN2}} \\
8 & [1]^{4} & [-1 + 2 \sqrt{3}]^{2} & [-1 - 2 \sqrt{3}]^{2}   & 0 &  \\
10 & [3]^{4} & [-2 + \sqrt{5}]^{3} & [-2 - \sqrt{5}]^{3}      & 1 &  \text{Complement of the sporadic matrix $\mathfrak{S}$, see \cite{GRWSN}}\\
12 & [1]^{6} & [-1 + 2 \sqrt{5}]^{3} & [-1 - 2 \sqrt{5}]^{3}  & 1 & \text{From $\{[-\sqrt{5}]^3,[\sqrt{5}]^3\}$ via Lemma~\ref{l41a}} \\
13 & [0]^{1} & [\sqrt{13}]^{6} & [-\sqrt{13}]^{6}             & 1 & \text{From $\{[-\sqrt{13}]^7,[\sqrt{13}]^7\}$ via \cite[Lemma~5.12]{GRWSN2}} \\
16 & [1]^{8} & [-1 + 2 \sqrt{7}]^{4} & [-1 - 2 \sqrt{7}]^{4}  & 0 & \\
16 & [1]^{12} & [-3 + 4 \sqrt{3}]^{2} & [-3 - 4 \sqrt{3}]^{2} & 0 & \\
16 & [3]^{8} & [-3 + 2 \sqrt{3}]^{4} & [-3 - 2 \sqrt{3}]^{4}  & 0 &  \\
17 & [0]^{1} & [\sqrt{17}]^{8} & [-\sqrt{17}]^{8}             & 1 & \text{From $\{[-\sqrt{17}]^9,[\sqrt{17}]^9\}$ via \cite[Lemma~5.12]{GRWSN2}} \\
18 & [1]^{12} & [-2 + 3 \sqrt{5}]^{3} & [-2 - 3 \sqrt{5}]^{3} & 1 & \text{From $\{[-\sqrt{5}]^3,[\sqrt{5}]^3\}$ via Lemma~\ref{l41a}} \\
20 & [3]^{10} & [-3 + 2 \sqrt{5}]^{5} & [-3 - 2 \sqrt{5}]^{5} & 0 & \\
21 & [0]^{1} & [\sqrt{21}]^{10} & [-\sqrt{21}]^{10} & 0 & \text{There is no $\{[-\sqrt{21}]^{11},[\sqrt{21}]^{11}\}$}\\
24 & [1]^{12} & [-1 + 2 \sqrt{11}]^{6} & [-1 - 2 \sqrt{11}]^{6} & ? & \\
24 & [1]^{18} & [-3 + 4 \sqrt{5}]^{3} & [-3 - 4 \sqrt{5}]^{3} & 1 & \text{From $\{[-\sqrt{5}]^3,[\sqrt{5}]^3\}$ via Lemma~\ref{l41a}}\\
24 & [1]^{20} & [-5 + 6 \sqrt{3}]^{2} & [-5 - 6 \sqrt{3}]^{2} & 0 & \\
24 & [3]^{12} & [-3 + 2 \sqrt{7}]^{6} & [-3 - 2 \sqrt{7}]^{6} & 0 & \\
\hline
\end{array}\]
\caption{Seidel matrices with quadratic eigenvalues ($\lambda\geq 0$ is an integer)}\label{table10x}
\end{table}
\renewcommand{\arraystretch}{1}
\normalsize

In Table~\ref{table10x} cases with quadratic eigenvalues are listed. We observe that we have examples of order $n\equiv 1\ (\mathrm{mod}\ 4)$ with spectrum $\{[0]^1, [\sqrt{n}]^{(n-1)/2}, [-\sqrt{n}]^{(n-1)/2}\}$, coming from the so-called conference two-graphs with spectrum $\{[-\sqrt{n}]^{(n+1)/2}, [\sqrt{n}]^{(n+1)/2}\}$ (this is an instance of \cite[Lemma~5.12]{GRWSN2}). Further examples can be obtained by combining these with Lemma~\ref{l41a}. We also observe that some of the non-existent cases, e.g.\ $\{[1]^4, [-1+2\sqrt{3}]^2, [-1-2\sqrt{3}]^2\}$, would come from the analogous, yet non-existent regular two-graphs with spectrum $\{[-\sqrt{3}]^2, [\sqrt{3}]^2\}$.

Having compiled the tables with the potential spectra, the search for a Seidel matrix $S$ of order $n$ with spectrum $\Lambda(S)=\{[\lambda]^a,[\mu]^b,[\nu]^c\}$ was carried out in the same way to what is described in Section~\ref{sect2}. However, before accepting a Seidel matrix of order $m\leq n$ encountered during the search, two further tests were performed. The principles of these techniques are well-known, see \cite{CD}, \cite{MSP}. Here we describe them for completeness, and to point out some implementation details.

First it was tested if the eigenvalues $\theta_i$ of the generated matrix satisfy $\min\{\lambda,\mu,\nu\} \leq \theta_i \leq \max\{\lambda,\mu,\nu\}$ for all $i\in\{0,\dots,m-1\}$. This condition follows from Theorem~\ref{tint}. While this can be checked by two positive semidefiniteness tests by computing the characteristic polynomials based on Lemma~\ref{l32}, in practice it is much more efficient to test instead the slightly weaker condition $\min\{\lambda,\mu,\nu\}-\varepsilon < \theta_i < \max\{\lambda,\mu,\nu\}+\varepsilon$ for some $\varepsilon>0$, say $\varepsilon=0.01$. This requires a positive definiteness test, which can be done by computing $m$ determinants. Another advantage of this weaker condition is that it can be tested by using integer arithmetic only, thus eliminates the inconvenience of dealing with irrational eigenvalues. Recall that a real symmetric matrix is called positive definite, if all of its eigenvalues are positive.
\begin{proposition}[Sylvester's criterion]
The real symmetric matrix $A=A^T$ is positive definite if and only if every leading principal minor of $A$ is positive.
\end{proposition}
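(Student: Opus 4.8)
The plan is to prove both implications, treating the forward direction as essentially immediate and concentrating the real work on the converse, which I would establish by induction on the order $n$ using the interlacing inequalities of Theorem~\ref{tint}.

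For the forward direction, suppose $A$ is positive definite. First I would observe that every leading principal submatrix $A_k$ (the top-left $k\times k$ block) is itself positive definite: given any nonzero $x\in\mathbb{R}^k$, padding it with zeros to $\tilde x\in\mathbb{R}^n$ gives $x^TA_kx=\tilde x^TA\tilde x>0$. Since a positive definite symmetric matrix has all eigenvalues positive and its determinant equals the product of those eigenvalues, $\det(A_k)>0$, which is exactly the statement that the $k$th leading principal minor is positive.

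For the converse I would induct on $n$. The base case $n=1$ is immediate, since $A=[a_{11}]$ with $a_{11}>0$. For the inductive step, let $A$ be of order $n$ with all leading principal minors positive, and let $A_{n-1}$ denote its leading $(n-1)\times(n-1)$ submatrix. The leading principal minors of $A_{n-1}$ coincide with the first $n-1$ leading principal minors of $A$ and are therefore all positive, so by the induction hypothesis $A_{n-1}$ is positive definite; writing its eigenvalues as $\mu_0\le\cdots\le\mu_{n-2}$, we have $\mu_i>0$ for every $i$. Applying Theorem~\ref{tint} with $m=n-1$ (so that $n-m=1$), the eigenvalues $\lambda_0\le\cdots\le\lambda_{n-1}$ of $A$ satisfy $\lambda_i\le\mu_i\le\lambda_{i+1}$ for $i\in\{0,\dots,n-2\}$. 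From $0<\mu_i\le\lambda_{i+1}$ I conclude that $\lambda_1,\dots,\lambda_{n-1}$ are all positive, so $A$ has at least $n-1$ positive eigenvalues.

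The one eigenvalue not yet controlled is the smallest, $\lambda_0$, and pinning down its sign is the crux of the argument. Here I would invoke the last remaining hypothesis: $\det(A)=\lambda_0\lambda_1\cdots\lambda_{n-1}$ is the $n$th leading principal minor and hence positive. Since $\lambda_1,\dots,\lambda_{n-1}>0$, their product is positive, forcing $\lambda_0>0$ as well. Thus all eigenvalues of $A$ are positive and $A$ is positive definite, completing the induction. The main obstacle is precisely this final sign determination: interlacing by itself yields only $n-1$ positive eigenvalues, and it is the positivity of the full determinant that rules out the one potentially nonpositive eigenvalue; without the top-order minor condition the smallest eigenvalue could fail to be positive.
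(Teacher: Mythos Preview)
Your argument is correct. The forward direction via padding is standard, and in the converse the combination of the inductive hypothesis with the Cauchy interlacing inequalities from Theorem~\ref{tint} cleanly forces $\lambda_1,\dots,\lambda_{n-1}>0$, after which positivity of $\det(A)$ pins down $\lambda_0$. There is no gap.

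The paper, however, does not actually prove this proposition: its ``proof'' is a bare citation to \cite[Corollary~7.1.5]{HJ95} and \cite[Theorem~7.2.5]{HJ95}. So your route is genuinely different in that you supply a self-contained argument, and it is a particularly apt one in context because it leverages Theorem~\ref{tint}, which the paper has already stated and uses elsewhere. The trade-off is that the paper keeps the exposition short by outsourcing a classical result, whereas your version makes the write-up independent of the reference at the cost of a paragraph; on the other hand, your proof reinforces the role of interlacing, which is one of the paper's workhorse tools.
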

\begin{proof}
See \cite[Corollary~7.1.5]{HJ95} and \cite[Theorem~7.2.5]{HJ95}.
\end{proof}
Thus, during the search once we considered a Seidel matrix of order $m\leq n$, we tested if both $S-(\min\{\lambda,\mu,\nu\}-\varepsilon)I$ and $-S+(\max\{\lambda,\mu,\nu\}+\varepsilon)I$ are positive definite.

The second test was to check if $\lambda$, $\mu$, and $\nu$ were eigenvalues of sufficiently high multiplicity of the encountered matrices themselves. Again, it follows from Theorem~\ref{tint} that if $\lambda$ is an $a$-fold eigenvalue of $S$ of order $n$ ($1\leq a\leq n$), then $\lambda$ is an at least ($a-1$)-fold eigenvalue of any sub-Seidel matrix of order $n-1$. By induction, it follows that any sub-Seidel matrix of order $m:=n-(a-1)$ must have an eigenvalue $\lambda$. Deciding the multiplicity of a given eigenvalue $\lambda$ is a rank computation, which can be performed in integer arithmetic as long as $\lambda$ is an integer.

\begin{lemma}\label{l44}
Let $A=A^T$ be a square matrix of order $n$ and let $\lambda\in\Lambda(A)$ with multiplicity $m\geq1$. Then $m=n-\mathrm{rank}(A-\lambda I)$. Moreover, if $\lambda\not\in\Lambda(A)$, then $\mathrm{rank}(A-\lambda I)=n$.
\end{lemma}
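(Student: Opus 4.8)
The plan is to invoke the spectral theorem for real symmetric matrices, which is exactly the structural fact that makes the simple rank formula hold. Since $A=A^T$ is real symmetric, it is orthogonally diagonalizable: there is an orthogonal matrix $Q$ (so $Q^T=Q^{-1}$) and a real diagonal matrix $D=\mathrm{diag}(\lambda_0,\dots,\lambda_{n-1})$ whose diagonal entries are the eigenvalues of $A$ counted with multiplicity, such that $A=QDQ^T$. By definition the multiplicity $m$ of $\lambda$ is the number of indices $i$ with $\lambda_i=\lambda$.

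Next I would transport the rank computation to the diagonal matrix. Writing $A-\lambda I=Q(D-\lambda I)Q^T$ and using that multiplication by the invertible matrices $Q$ and $Q^T$ does not change the rank, I obtain $\mathrm{rank}(A-\lambda I)=\mathrm{rank}(D-\lambda I)$. But $D-\lambda I$ is the diagonal matrix with entries $\lambda_i-\lambda$, and its rank is simply the number of nonzero diagonal entries, i.e.\ $n$ minus the number of indices $i$ with $\lambda_i=\lambda$. That count is precisely $m$, so $\mathrm{rank}(A-\lambda I)=n-m$, which rearranges to the claimed identity $m=n-\mathrm{rank}(A-\lambda I)$. (Equivalently, one may argue via rank--nullity: $\ker(A-\lambda I)$ is the eigenspace of $\lambda$, whose dimension equals $m$ for a symmetric matrix, and $\dim\ker(A-\lambda I)=n-\mathrm{rank}(A-\lambda I)$.)

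For the final assertion, if $\lambda\notin\Lambda(A)$ then no diagonal entry of $D-\lambda I$ vanishes, so $D-\lambda I$---and therefore $A-\lambda I$---is nonsingular of rank $n$; equivalently this is the $m=0$ instance of the same computation. There is no genuine obstacle here: the only point requiring care is that the clean equality of $m$ with the nullity $n-\mathrm{rank}(A-\lambda I)$ relies on the coincidence of algebraic and geometric multiplicities, which can fail for non-symmetric matrices (a nontrivial Jordan block has geometric multiplicity strictly below its algebraic multiplicity). It is exactly the symmetry hypothesis $A=A^T$, via the spectral theorem, that guarantees this coincidence and hence the validity of the rank formula.
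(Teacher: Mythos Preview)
Your proof is correct and follows essentially the same approach as the paper: both invoke the spectral theorem to diagonalize $A$ and then read off the rank of $A-\lambda I$ from the diagonal form. The paper phrases the final step slightly differently (factoring $A-\lambda I$ as a product and citing a rank inequality from Horn--Johnson), but the underlying argument is identical.
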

\begin{proof}
Let $\lambda_0,\dots,\lambda_{n-m-1},\lambda_{n-m}=\lambda,\dots,\lambda_{n-1}=\lambda$ denote the eigenvalues of $A$. Since the matrix $A$ is normal, it is unitary diagonalizable by the spectral theorem. Therefore we can write $A= U\mathrm{diag}(\lambda_0,\hdots,\lambda_{n-1})U^{\ast}$ with some unitary matrix $U$, and in turn $A-\lambda I = (U\mathrm{diag}(\lambda_0-\lambda,\hdots,\lambda_{n-m-1}-\lambda,1,\hdots, 1))\mathrm{diag}(1,\hdots,1,0,\hdots, 0)U^\ast$, where the diagonal matrix on the right hand side has exactly $n-m$ nonzero entries. By part (f) of \cite[Theorem~0.4.6]{HJ95} this implies that $\mathrm{rank}(A-\lambda I)= n-m$.
\end{proof}

In case of Seidel matrices with quadratic eigenvalues, Lemma~\ref{l44} can only be used (within the framework of integer arithmetic) for the integer eigenvalue. For quadratic eigenvalues, an analogous condition holds based on the characteristic polynomial.
\begin{lemma}\label{l45}
Let $A=A^T$ be a square matrix of order $n$ and let $\lambda_0\in\Lambda(A)$ with mulitplicity $m\geq 1$. Let $p_A(\lambda)$ be the characteristic polynomial of $A$. Then $m=\mathrm{deg}(\mathrm{GCD}(p_A(\lambda),(\lambda-\lambda_0)^n)$. Moreover, if $\lambda_0\notin \Lambda(A)$ then $\mathrm{deg}(\mathrm{GCD}(p_A(\lambda),(\lambda-\lambda_0)^n)=0$.
\end{lemma}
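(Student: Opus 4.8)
The plan is to mirror the structure of the proof of Lemma~\ref{l44}, replacing the rank computation (valid for integer $\lambda$) with a polynomial gcd computation that remains meaningful when $\lambda_0$ is a quadratic irrational. First I would recall that the multiplicity $m$ of $\lambda_0$ as an eigenvalue of $A$ is, by definition, exactly the multiplicity of $\lambda_0$ as a root of the characteristic polynomial $p_A(\lambda)$; this is precisely the point where $A=A^T$ (hence diagonalizability, as used via the spectral theorem in Lemma~\ref{l44}) guarantees that algebraic and geometric multiplicities coincide, so there is no ambiguity in what ``$m$'' means. Thus the entire statement reduces to a purely algebraic fact about the polynomial $p_A(\lambda)$ and the monomial power $(\lambda-\lambda_0)^n$.

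The core computation is then to determine $\mathrm{GCD}(p_A(\lambda),(\lambda-\lambda_0)^n)$. Writing the factorization $p_A(\lambda)=(\lambda-\lambda_0)^{m}\,r(\lambda)$ where $r(\lambda_0)\neq 0$ (so $(\lambda-\lambda_0)\nmid r$), I would observe that the only common irreducible factor of $p_A$ and $(\lambda-\lambda_0)^n$ is $(\lambda-\lambda_0)$ itself. The gcd therefore collects this factor to the smaller of the two exponents, namely $\min\{m,n\}$. Since $\lambda_0\in\Lambda(A)$ forces $1\leq m\leq n$, we have $\min\{m,n\}=m$, giving $\mathrm{GCD}(p_A(\lambda),(\lambda-\lambda_0)^n)=(\lambda-\lambda_0)^{m}$ and hence $\mathrm{deg}(\mathrm{GCD}(p_A(\lambda),(\lambda-\lambda_0)^n))=m$, as claimed. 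The ``moreover'' clause follows at once: if $\lambda_0\notin\Lambda(A)$ then $(\lambda-\lambda_0)\nmid p_A$, so $p_A$ and $(\lambda-\lambda_0)^n$ share no nonconstant factor and their gcd is a constant, of degree $0$.

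The one genuinely delicate point — and the step I expect to require the most care rather than the most computation — is that these gcd statements are asserted to hold \emph{over the base field in which the algorithm actually operates}, i.e.\ over $\mathbb{Q}$ (or over $\mathbb{Q}(\lambda_0)$), not merely over $\mathbb{C}$. The issue is that when $\lambda_0=s+t\sqrt{D}$ is a quadratic irrational, the monomial $(\lambda-\lambda_0)^n$ does not have rational coefficients, so one cannot literally run the Euclidean algorithm over $\mathbb{Q}$. The resolution is the standard fact that the gcd of two polynomials is invariant under field extension (the Euclidean algorithm commutes with embeddings of fields), so the factored-form argument above, carried out over the splitting field $\mathbb{C}$ (or over $\mathbb{Q}(\lambda_0)$), yields the correct degree. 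For the actual integer-arithmetic implementation one would work with the minimal polynomial $m_{\lambda_0}(\lambda)\in\mathbb{Z}[\lambda]$ of $\lambda_0$ and exploit that the conjugate eigenvalue $\bar\lambda_0=s-t\sqrt{D}$ occurs with the \emph{same} multiplicity $m$ (as already noted in the paper's list of necessary conditions); I would remark that computing $\mathrm{deg}(\mathrm{GCD}(p_A(\lambda),m_{\lambda_0}(\lambda)^n))$ over $\mathbb{Z}$ then returns $2m$, from which $m$ is recovered. I would keep the statement as given (the clean complex/$\mathbb{Q}(\lambda_0)$ formulation) and relegate the $\mathbb{Z}$-arithmetic refinement to a closing remark, since the lemma as stated is true verbatim and its proof is the short factorization argument above.
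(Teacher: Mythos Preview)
Your core argument---factor $p_A(\lambda)=(\lambda-\lambda_0)^m r(\lambda)$ with $r(\lambda_0)\neq 0$ and read off $\mathrm{GCD}(p_A,(\lambda-\lambda_0)^n)=(\lambda-\lambda_0)^{\min\{m,n\}}=(\lambda-\lambda_0)^m$---is exactly the paper's one-line proof. Your surrounding discussion is superfluous here: the hypothesis $A=A^T$ is not actually used (the statement concerns algebraic multiplicity only), and your integer-arithmetic refinement via the minimal polynomial and the conjugate eigenvalue is precisely what the paper states separately as Corollary~\ref{c46}.
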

\begin{proof}
This is immediate, as the highest exponent $k$ such that $p_A(\lambda)$ is divisible by $(\lambda-\lambda_0)^k$, is exactly $m$.
\end{proof}
The following immediate corollary of Lemma~\ref{l45} is powerful enough to deal with the quadratic case within the framework of integer arithmetic.
\begin{corollary}\label{c46}
Let $A=A^T$ be a square matrix of order $n$ and let $\mu, \nu\in\Lambda(A)$ be distinct conjugate algebraic integers of degree $2$ with mulitplicity $m\geq 1$ each. Let $p_A(\lambda)$ be the characteristic polynomial of $A$. Then $m=\mathrm{deg}(\mathrm{GCD}(p_A(\lambda),(\lambda^2-(\mu+\nu)\lambda+\mu\nu)^{\left\lfloor n/2\right\rfloor})$. Moreover, if $\mu\notin \Lambda(A)$ then $\nu\notin\Lambda(A)$ and $\mathrm{deg}(\mathrm{GCD}(p_A(\lambda),(\lambda^2-(\mu+\nu)\lambda+\mu\nu)^{\left\lfloor n/2\right\rfloor})=0$.
\end{corollary}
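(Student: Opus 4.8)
The plan is to reduce the statement to a computation of the $q$-adic valuation of the characteristic polynomial, in direct analogy with the proof of Lemma~\ref{l45}. Write $q(\lambda):=\lambda^2-(\mu+\nu)\lambda+\mu\nu=(\lambda-\mu)(\lambda-\nu)$ for the monic quadratic vanishing precisely at the conjugate pair. Since $\mu$ and $\nu$ are conjugate algebraic integers of degree $2$, their sum $\mu+\nu$ and product $\mu\nu$ are rational integers, so $q\in\mathbb{Z}[\lambda]$ and, being the common minimal polynomial of $\mu$ and $\nu$, $q$ is irreducible over $\mathbb{Q}$. First I would record that $q(\lambda)^{\lfloor n/2\rfloor}\in\mathbb{Z}[\lambda]$ and that $p_A\in\mathbb{Z}[\lambda]$ (the entries of $A$ being integers); this is exactly what renders the $\mathrm{GCD}$ computable in rational arithmetic and lets one sidestep the irrational values $\mu,\nu$, which is the practical purpose of the corollary.

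Next I would pin down the exact power of $q$ dividing $p_A$. Over $\overline{\mathbb{Q}}$ the characteristic polynomial factors as $p_A(\lambda)=\prod_i(\lambda-\lambda_i)$, each root appearing with its algebraic multiplicity; by hypothesis $\mu$ and $\nu$ each occur with multiplicity $m$, and they are the only roots of $q$. Hence $q(\lambda)^m$ divides $p_A(\lambda)$ while $q(\lambda)^{m+1}$ does not, i.e.\ the $q$-adic valuation of $p_A$ equals $m$. This is the precise analogue of the one-line argument proving Lemma~\ref{l45}.

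Then I would combine this with the elementary bound $m\leq\lfloor n/2\rfloor$: as $\mu\neq\nu$ account for $2m$ of the $n$ eigenvalues (counted with multiplicity), we have $2m\leq n$, so the exponent $\lfloor n/2\rfloor$ is never smaller than $m$. Because $q$ is irreducible over $\mathbb{Q}$, the monic common divisors of $p_A$ and $q^{\lfloor n/2\rfloor}$ are exactly the powers $q^{j}$ with $0\leq j\leq\lfloor n/2\rfloor$; therefore $\mathrm{GCD}(p_A(\lambda),q(\lambda)^{\lfloor n/2\rfloor})=q(\lambda)^{\min\{m,\lfloor n/2\rfloor\}}=q(\lambda)^m$, and reading off the exponent of $q$ in this monic gcd returns the multiplicity $m$. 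For the final assertion, I would invoke that $p_A\in\mathbb{Q}[\lambda]$ forces $\mu$ and $\nu$ to be roots of $p_A$ only together, since each is a root if and only if their shared minimal polynomial $q$ divides $p_A$; thus $\mu\notin\Lambda(A)$ yields $\nu\notin\Lambda(A)$ and $q$ does not divide $p_A$, so the gcd is a nonzero constant, of degree $0$.

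I expect no real obstacle: the statement is an immediate corollary of Lemma~\ref{l45}, and the argument is essentially bookkeeping. The two points that genuinely need care are conceptual rather than computational—insisting that the $\mathrm{GCD}$ be taken over $\mathbb{Q}$ (over $\mathbb{R}$ the factor $q$ would split and the gcd would no longer be a single power of an irreducible), and verifying $2m\leq n$ so that capping the exponent at $\lfloor n/2\rfloor$ never truncates the true valuation $m$.
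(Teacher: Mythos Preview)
Your argument is exactly what the paper has in mind: the corollary is presented there as an immediate consequence of Lemma~\ref{l45} with no separate proof, and you have simply written out those details correctly, including the key observations that $q$ is irreducible over $\mathbb{Q}$ and that $2m\le n$ so the cap at $\lfloor n/2\rfloor$ is harmless.

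One point you should make explicit rather than glide past: your computation gives $\mathrm{GCD}(p_A,q^{\lfloor n/2\rfloor})=q^m$, which has degree $2m$, not $m$. At the end you quietly switch from the ``degree'' appearing in the stated formula to the ``exponent of $q$'', which is indeed what recovers $m$; but as written the displayed identity in the corollary is off by a factor of two, and your write-up should either flag this or state the conclusion as $2m=\deg(\mathrm{GCD}(\cdots))$.
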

We remark here that Lemma~\ref{l45} and Corollary~\ref{c46} can be reformulated in terms of higher order derivatives of $p_A(\lambda)$.

While the positive definiteness tests allowed certain matrices with not necessarily smallest eigenvalue $\min\{\lambda,\mu,\nu\}$ to be considered and augmented during the search, once we have reached matrices of size $n$, all of these were eliminated by repeated applications of Lemma~\ref{l44} and Corollary~\ref{c46}. Thus at the end of the search we were either able to generate all Seidel matrices of the given spectrum up to equivalence, or conclude that no such matrix exists.

In a recent paper Seidel matrices with exactly three distinct eigenvalues were studied \cite{GRWSN}, and it was proved that a large family of these have a regular graph in their switching class. This correspondence is useful because graphs are simpler objects to study, and known nonexistence results on regular graphs yield nonexistence results of Seidel matrices. We conclude this section by two results. The first highlights one of our nonexistence results, which does not seem to follow from any of the recent theoretical considerations \cite{GRWSN}, \cite{GRWSN2}.
\begin{theorem}
Seidel matrices of order $18$ with spectrum $\{[-9]^3,[1]^9,[3]^6\}$ do not exist.
\end{theorem}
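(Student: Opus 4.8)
The plan is to pass to the complement and exploit the fact that the largest eigenvalue of the hypothetical matrix is only $3$. Negating a Seidel matrix negates its spectrum, so a Seidel matrix $S$ of order $18$ has spectrum $\{[-9]^3,[1]^9,[3]^6\}$ if and only if $T:=-S$ has spectrum $\{[-3]^6,[-1]^9,[9]^3\}$; in particular such a $T$ has smallest eigenvalue $\lambda_{\min}(T)=-3$. It therefore suffices to show that no Seidel matrix of order $18$ with $\lambda_{\min}\geq -3$ has spectrum $\{[-3]^6,[-1]^9,[9]^3\}$. Since the matrices with $\lambda_{\min}\geq -3$ are very rare (cf.\ Table~\ref{table3v2}), the natural route is to classify all of them in order $18$ and inspect their spectra.

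First I would generate, order by order, every Seidel matrix with $\lambda_{\min}\geq -3$ using the canonical augmentation procedure of Section~\ref{sss2}, discarding any augmented matrix $\widehat{T}$ for which $\widehat{T}+3I$ fails to be positive semidefinite; this test is carried out in integer arithmetic through the sign pattern of the characteristic polynomial, as in Lemma~\ref{l32}. The key point making this restricted generation complete is eigenvalue interlacing (Theorem~\ref{tint}): if $T$ satisfies $\lambda_{\min}(T)\geq -3$, then every principal sub-Seidel matrix of $T$ also satisfies $\lambda_{\min}\geq -3$, so no candidate is ever pruned prematurely. Because the number of such matrices grows slowly (there are $96$ of them in order $12$ and $108$ in order $13$), continuing the generation up to order $18$ is computationally light.

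Additional pruning tailored to the target spectrum can be switched on to accelerate the search. Since $9$ is to be an eigenvalue of multiplicity $3$ in an order-$18$ matrix, Theorem~\ref{tint} forces $9$ to be an eigenvalue of every order-$16$ sub-Seidel matrix, and likewise $-1$ (multiplicity $9$) must occur in every order-$10$ sub-matrix and $-3$ (multiplicity $6$) in every order-$13$ sub-matrix; each of these multiplicity tests is an integer rank computation via Lemma~\ref{l44}. Matrices violating the eigenvalue window $-3\leq\theta_i\leq 9$ or these multiplicity requirements are rejected along the way. At order $18$ I would compute the characteristic polynomial of each surviving matrix and test, again in exact arithmetic (Lemma~\ref{l44}), whether its spectrum equals $\{[-3]^6,[-1]^9,[9]^3\}$.

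The computation returns no order-$18$ Seidel matrix with this spectrum; hence no $T$, and therefore no $S$ with spectrum $\{[-9]^3,[1]^9,[3]^6\}$, exists. The main difficulty here is not conceptual but one of verification: the conclusion is a nonexistence statement, so every eigenvalue bound and multiplicity test along the search must be performed in exact integer arithmetic (as afforded by the integrality of the whole spectrum), and the completeness of the restricted generation must be underwritten entirely by interlacing. The feasibility of the whole argument rests on the rarity of matrices with $\lambda_{\min}\geq -3$, which keeps the generated family small enough to classify exhaustively.
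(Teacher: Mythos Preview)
Your proposal is correct and matches the approach the paper actually uses for this case. The paper does not give a separate proof of this theorem; it is reported as a byproduct of the computer search described in Section~\ref{sect4}, and the paper explicitly remarks that ``the cases where the smallest (or largest) eigenvalue was $-3$ (or $3$, respectively) could be very easily classified (cf.\ Table~\ref{table3v2})'', which is precisely your complementation trick of passing from $S$ to $T=-S$ so as to work with $\lambda_{\min}\geq-3$.
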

In the manuscript \cite{GRWSN} the question was raised whether there exist additional examples of Seidel matrices beyond $\mathfrak{S}$ of order $n=10$ with spectrum $\Lambda(\mathfrak{S})=\{[-3]^4,[2+\sqrt{5}]^3,[2-\sqrt{5}]^3\}$, and its complement $-\mathfrak{S}$, having exactly three distinct eigenvalues, which do not contain a regular graph in their switching class. Our second result answers this question.
\begin{theorem}
Let $S$ be a Seidel matrix of order $n\leq 23$ with exactly three distinct eigenvalues. Then $S$ is equivalent to some Seidel matrix $S'$ so that its ambient graph $\Gamma(S')$ is regular, except for the cases $S\sim\pm\mathfrak{S}$ or $S\sim\pm\left(J_3\otimes(S_6-I_6)+I_{18}\right)$, where $S_6$ is a Seidel matrix of order $6$ with spectrum $\{[-\sqrt{5}]^3,[\sqrt{5}]^3\}$.
\end{theorem}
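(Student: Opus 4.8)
The plan is to reduce the statement to a finite, explicit check over the matrices already classified in Tables~\ref{table9x} and \ref{table10x}. The crucial observation is that the switching class of a Seidel matrix $S$ contains a regular graph if and only if $S$ has a $\pm1$ eigenvector. Indeed, writing $A'=(J-I-S')/2$ for a representative $S'=PSP^{T}$ with $P$ a signed permutation matrix, the ambient graph $\Gamma(S')$ is $k$-regular exactly when $A'\mathbf{1}=k\mathbf{1}$, which rearranges to $S'\mathbf{1}=(n-1-2k)\mathbf{1}$; that is, $\mathbf{1}$ must be an eigenvector of $S'$. Splitting $P=D\Pi$ into a diagonal $\pm1$ part $D$ and a permutation $\Pi$, and using that $\Pi$ only relabels vertices and hence preserves regularity, this occurs for some $S'\sim S$ precisely when $S\mathbf{z}=\theta\mathbf{z}$ for some $\mathbf{z}\in\{-1,1\}^{n}$. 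Since $S$ has integer entries while the entries of $\mathbf{z}$ are $\pm1$, the eigenvalue $\theta$ is forced to be an integer, so only the integer eigenvalue(s) of $S$ need to be examined. By \cite[Corollary~5.5]{GRWSN2} at least one always exists.

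First I would invoke the completeness of the enumeration carried out earlier in this section: for every $n\leq 23$ the full list of inequivalent Seidel matrices with exactly three distinct eigenvalues is available (the definite counts in the ``$\#$'' columns of Tables~\ref{table9x} and \ref{table10x}, summarised in Table~\ref{tableno3}). For each representative and each integer eigenvalue $\theta$, Lemma~\ref{l44} lets me compute the eigenspace $\ker(S-\theta I)$ exactly in integer arithmetic, and I would then search it for a vector in $\{-1,1\}^{n}$: normalising the first coordinate to $+1$ and propagating the constraints $\sum_{j\neq i}S_{ij}x_{j}=\theta x_{i}$ row by row yields a backtracking search that, for these small orders and the given representatives, terminates quickly. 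Whenever such a vector is found, the corresponding switching $D$ produces an $S'$ with $\Gamma(S')$ regular; when the search is exhausted without success, the class is an exception.

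Carrying this out, I expect every existing matrix in Table~\ref{table9x} (the integer-spectrum cases, which include all the $K(a,b)$ and the regular-two-graph constructions of Lemma~\ref{l41a}) to admit such a $\pm1$ eigenvector, and all but two of the existing classes of Table~\ref{table10x} to do so as well; the failures should be exactly $\pm\mathfrak{S}$ in order $10$ and $\pm(J_{3}\otimes(S_{6}-I_{6})+I_{18})$ in order $18$, whose only integer eigenvalues are $-3$ (respectively $3$) and $\pm1$. For these two it remains to certify the negative, namely that $\ker(S+3I)$ (of dimension $4$) and $\ker(S-I)$ (of dimension $12$) contain no $\pm1$ vector; this is again a finite verification, and agreement with \cite{GRWSN}, which already singled out $\pm\mathfrak{S}$, serves as a useful consistency check. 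Note also that the property is complement-invariant, since $S\mathbf{z}=\theta\mathbf{z}$ gives $(-S)\mathbf{z}=-\theta\mathbf{z}$, so exceptions automatically come in $\pm$ pairs.

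The hard part will not be the $\pm1$-eigenvector test itself---once a matrix is in hand this is cheap---but rather guaranteeing that the underlying list of representatives is genuinely complete and that no equivalence class is silently missed; this rests on the correctness of the canonical-augmentation search and the interlacing-based pruning of Lemma~\ref{l44} and Corollary~\ref{c46}, together with the independent consistency checks. A secondary subtlety is that a class whose spectrum contains several integer eigenvalues must be tested against all of them before it can be declared exceptional, since overlooking one could convert a genuinely regular member into a spurious exception.
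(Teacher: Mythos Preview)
Your proposal is correct and matches the paper's (implicit) approach: the theorem is presented in the paper as a direct consequence of the computer classification summarised in Tables~\ref{table9x} and \ref{table10x}, with no separate proof given beyond the remark that the exceptional cases were ``verified by computers.'' Your write-up supplies exactly the missing ingredient the paper leaves tacit, namely the well-known equivalence between the switching class containing a regular graph and $S$ possessing a $\{-1,1\}$-eigenvector (forcing the relevant eigenvalue to be the integer one), and then reduces everything to a finite check over the enumerated representatives---which is precisely the verification the authors performed.
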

We have verified by computers that the Seidel matrices $\pm\left(J_{2k+1}\otimes(S_6-I_6)+I_{6(2k+1)}\right)$ do not contain any regular graph in their switching class for $k\in\{1,2\}$. It remains to be seen if this holds for all $k\geq 3$ too. In light of the results tabulated in Table~\ref{table10x}, it is not at all clear whether there exists any further examples (beyond $\pm\mathfrak{S}$) of Seidel matrices with exactly three distinct eigenvalues, having a quadratic eigenvalue, which do not immediately come from the conference two-graphs.
\section{Equiangular lines in $\mathbb{R}^{7}$, $\mathbb{R}^{12}$, and $\mathbb{R}^{14}$}\label{sect5}

The considerations of this paper were motivated in part by the following open problem: does there exist a configuration of $29$ equiangular lines in $\mathbb{R}^{14}$? In the terminology of Seidel matrices, such a system would correspond to a Seidel matrix of order $n=29$ with smallest eigenvalue $\lambda_{\min}=-5$ of multiplicity $15$ (as other potential common angles besides $\alpha=1/5$ can easily be eliminated by results described in \cite{LS}). By Theorem~\ref{tint}, this implies that any sub-Seidel matrix necessarily have smallest eigenvalue at least $-5$, and in particular, sub-Seidel matrices of size $n=15$ have smallest eigenvalue exactly $\lambda_{\min}=-5$. Our initial intuition was that the condition ``$S$ has an eigenvalue $\lambda=-5$'' is extremely strong, and only a few million such matrices of order $15$ are to be found. However, these expectations turned out to be wrong, as inspection of Table~\ref{table3v2} reveals that the number of such matrices grows more rapidly than initially anticipated. While we think that generating the $14\times 14$ Seidel matrices with $\lambda=-5$, or even those with $\lambda_{\min}=-5$ is very much possible with certain additional efforts, classification of the $15\times 15$ Seidel matrices with the same properties, and moreover, augmenting all of those to reach size $n=29$ seems currently out of reach.

Nevertheless, to get some insight into this problem, and in particular, to see how rapidly the number of generated matrices grows in the beginning, and then how fast it declines once we encounter matrices which are large enough to have a guaranteed eigenvalue, we studied two simpler, yet analogous problems.

\subsection{28 lines in $\mathbb{R}^7$ at angle $1/3$}\label{ss53} The first problem we studied was the generation of equiangular lines in $\mathbb{R}^{7}$ with common angle $\alpha=1/3$. Such equiangular line systems correspond to Seidel matrices of order $n\geq 8$ with smallest eigenvalue $\lambda_{\min}=-3$ of multiplicity at least $n-7$. It turned out, that this problem is very easy as the number of Seidel matrices with $\lambda_{\min}\geq-3$ is very limited, see Table~\ref{table3v2}. We were able to generate the counts shown in Table~\ref{table7} effortlessly.

\tiny
\begin{table}[ht]
\[\arraycolsep=4pt
\begin{array}{c|cccccccccccccccccccccc}
\hline
n  & 8  &  9 & 10 & 11 & 12 &  13 &  14 &  15 & 16 & 17 & 18 & 19 & 20 & 21 & 22 & 23 &  24 & 25  & 26  & 27 & 28 & 29\\
\# & 23 & 37 & 54 & 70 & 90 & 101 & 103 & 101 & 90 & 70 & 54 & 37 & 23 & 16 & 10 &  5 &   3 &  2  &  1  &  1 &  1 &  0\\ 
\hline
\end{array}\]
\caption{Seidel matrices with $\lambda_{\min}=-3$ of multiplicity at least $n-7$}\label{table7}
\end{table}
\normalsize

Table~\ref{table7} displays counts regarding the number of inequivalent configurations of $n\geq 8$ equiangular lines in $\mathbb{R}^{7}$ with common angle $1/3$ (for $1\leq n\leq 7$ refer to Table~\ref{table3v2}). It shows that the number of distinct configurations grows at first, reaches its peek at halfway at $n=14$ lines, and then starts to decrease. Uniqueness is reached at $n=26$ lines. It is known that a configuration of $n=29$ lines is impossible \cite{LS}; moreover, $28$ is the maximum number of equiangular lines in $\mathbb{R}^d$ for $7\leq d\leq 13$. This configuration of $n=28$ lines corresponds to a Seidel matrix with spectrum $\{[-3]^{21},[9]^{7}\}$.

\subsection{20 lines in $\mathbb{R}^{12}$ at angle $1/5$}
While the maximum number of equiangular lines is known to be $28$ in $\mathbb{R}^{12}$ (see Section~\ref{ss53}), it was not known prior to this work whether there exists a configuration of $21$ equiangular lines with common angle $\alpha = 1/5$ (see \cite{GRWSN2}). Such a hypothetical configuration would correspond to a Seidel matrix of order $n=21$ with smallest eigenvalue $\lambda_{\min}=-5$ of multiplicity exactly $9$. This in turn implies that any of its $13\times 13$ sub-Seidel matrix would necessarily have smallest eigenvalue $\lambda_{\min}=-5$. The results tabulated in Table~\ref{table3v2} shows that there are exactly $11295930$ such matrices, which should be augmented as described in Section~\ref{sect2}, and then pruned according to the multiplicity of the eigenvalue $-5$ as described in Section~\ref{sect4} until $n=21$ lines are reached. However, no such configuration turned up during the search, and therefore we have the following result.

\begin{theorem}
The maximum number of equiangular lines in $\mathbb{R}^{12}$ with common angle $\alpha=1/5$ is $n=20$.
\end{theorem}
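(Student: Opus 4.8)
The plan is to translate the statement into the language of Seidel matrices and then settle it by an exhaustive, interlacing-pruned search. Via the correspondence recalled in the Introduction, a configuration of $n$ equiangular lines in $\mathbb{R}^{12}$ with common angle $1/5$ is exactly a Seidel matrix of order $n$ whose smallest eigenvalue is $\lambda_{\min}=-5$ with multiplicity at least $n-12$. For the \emph{lower bound} it therefore suffices to exhibit one Seidel matrix of order $20$ with $\lambda_{\min}=-5$ of multiplicity $8$; the spectrum $\{[-5]^8,[1]^5,[5]^7\}$ in Table~\ref{table9x} provides such a matrix (in fact $8$ inequivalent ones), so $20$ lines do exist in $\mathbb{R}^{12}$. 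The substance of the theorem is the \emph{upper bound}: there is no Seidel matrix of order $21$ with $\lambda_{\min}=-5$ of multiplicity $\geq 9$.

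To rule out order $21$, I would first invoke the interlacing inequalities of Theorem~\ref{tint} in two ways. On the one hand, if a putative order-$21$ matrix $S$ has $\lambda_{\min}=-5$, then every principal sub-Seidel matrix has smallest eigenvalue $\geq -5$. On the other hand, since $-5$ occurs with multiplicity $\geq 9$ in $S$, repeated interlacing forces $-5$ to be an eigenvalue of multiplicity at least $m-12$ in every $m\times m$ principal sub-Seidel matrix; in particular each $13\times 13$ sub-Seidel matrix has $-5$ as an eigenvalue and hence smallest eigenvalue exactly $-5$. The complete list of such order-$13$ matrices is already at hand: by Table~\ref{table3v2} there are exactly $11295930$ Seidel matrices of order $13$ with $\lambda_{\min}=-5$, and these serve as the starting-point set.

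Next I would run the canonical-augmentation engine of Section~\ref{sect2}, adjoining one row and column at a time and pruning aggressively at each order $m$ via the two interlacing consequences above. The eigenvalue-range constraint $\lambda_{\min}\geq -5$ is enforced by a Sylvester positive-definiteness test applied to $S+(5+\varepsilon)I$ (as in Lemma~\ref{l32} and the discussion preceding this theorem), which is carried out in integer arithmetic; the multiplicity constraint ``$-5$ is an eigenvalue of multiplicity $\geq m-12$'' is enforced by the rank computation $m-\mathrm{rank}(S+5I)$ of Lemma~\ref{l44}, again over the integers. Any matrix violating either condition cannot extend to the desired order-$21$ object and is discarded. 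The search terminates at order $21$; the claim is that no matrix survives, whence no $21$-line configuration exists and the maximum is exactly $20$.

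The main obstacle is computational rather than conceptual: as Table~\ref{table3v2} already signals, the number of order-$13$ (and a fortiori order-$14$, $15$) Seidel matrices with $\lambda_{\min}=-5$ is enormous, so feasibility hinges entirely on the strength of the multiplicity pruning. The bound ``multiplicity of $-5$ at order $m$ is at least $m-12$'' is tight at the top but weak for $m$ near $13$, so one must verify empirically that the surviving population contracts rather than explodes as $m$ grows past the halfway point --- precisely the declining-then-vanishing behaviour observed in the simpler $\mathbb{R}^{7}$ experiment of Table~\ref{table7} and Section~\ref{ss53}. A secondary concern is the correctness of so large an exhaustive computation, which I would address through the independent consistency checks described in Section~\ref{sect6}.
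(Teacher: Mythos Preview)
Your proposal is correct and follows essentially the same approach as the paper: translate to Seidel matrices, use interlacing to deduce that every $13\times 13$ principal submatrix of a putative order-$21$ example has $\lambda_{\min}=-5$, take the $11295930$ such matrices from Table~\ref{table3v2} as the starting set, and then canonically augment with the Sylvester/rank pruning of Section~\ref{sect4} until the search dies out before reaching $n=21$. The only cosmetic difference is that the paper, for performance reasons, records in Table~\ref{table8} the slightly larger counts of matrices with $-5\in\Lambda(S)$ of multiplicity $\geq n-12$ without insisting that $-5$ be the smallest eigenvalue, but this does not affect the argument.
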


In Table~\ref{table8} we display the counts regarding the number of Seidel matrices of order $n\geq 13$ having an eigenvalue $-5$ of multiplicity at least $n-12$. Note that, for performance reasons, these numbers were not filtered further according to whether or not $-5$ is the smallest eigenvalue. It turns out that each of the $32$ Seidel matrices of order $n=20$ found has in fact smallest eigenvalue $\lambda_{\min}=-5$. This implies, that there exist exactly $32$ distinct configurations of $20$ equiangular lines in $\mathbb{R}^{12}$ with common angle $1/5$. We note that $8$ of these have spectrum $\{[-5]^8,[1]^5,[5]^7\}$, see Table~\ref{table9x}. We remark here that large enough Seidel matrices with smallest eigenvalue exactly $-5$ are equivalent to one whose ambient graph is a Dynkin graph \cite{N}.

\tiny
\begin{table}[h]
\[\begin{array}{c|ccccccccc}
\hline
n   &   13     &   14    &    15   &   16   &   17   &  18   & 19  & 20 & 21\\
 \# & 26030960 & 8897086 & 2931650 & 851892 & 155223 & 16385 & 852 & 32 & 0\\
\hline
\end{array}\]
\caption{Seidel matrices with $\lambda=-5\in\Lambda(S)$ of multiplicity at least $n-12$}\label{table8}
\end{table}
\normalsize

\subsection{28 lines in $\mathbb{R}^{14}$ at angle $1/5$}
The two case studies described in the previous subsections demonstrated that it is possible to classify all equiangular configurations in $\mathbb{R}^d$ for small $d$. The next interesting open case is to decide the maximum number of equiangular lines in $\mathbb{R}^{14}$, which is known to be either $28$ or $29$, see \cite{GRWSN2}. In Section~\ref{ss53} a well-known configuration of $28$ lines in $\mathbb{R}^7$ (hence, in $\mathbb{R}^{14}$) with common angle $1/3$ was obtained as a result of a computer search. Here we recall and analyze a construction with common angle $1/5$. The Seidel matrices of order $36$ with spectrum $\{[-5]^{21},[7]^{15}\}$ were classified in \cite{MSP}. By \cite[Theorem~5.16]{GRWSN2}, removing any sub-Seidel matrix equivalent to $J_8-I_8$ yields a Seidel matrix of order $28$ with spectrum $\{[-5]^{14}, [3]^7, [7]^{7}\}$, which corresponds to a configuration of $28$ equiangular lines in $\mathbb{R}^{14}$. Such a configuration was shown first in \cite{JCT}. It turns out that the $227$ Seidel matrices of order $36$ result in $4009$ Seidel matrices of order $28$, forming $1045$ distinct equivalence classes. We record this enumeration as follows.
\begin{theorem}\label{ttt1}
There exist, up to equivalence, at least $1045$ Seidel matrices of order $28$ with spectrum $\{[-5]^{14}, [3]^7, [7]^{7}\}$. All of these have a regular graph in their switching class.
\end{theorem}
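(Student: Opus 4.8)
The plan is to establish Theorem~\ref{ttt1} computationally, by exhibiting the $1045$ Seidel matrices explicitly rather than by a structural existence argument. The starting point is the known classification of the $227$ Seidel matrices of order $36$ with spectrum $\{[-5]^{21},[7]^{15}\}$ from \cite{MSP}. First I would take each of these $227$ matrices and, guided by \cite[Theorem~5.16]{GRWSN2}, locate within it every principal $8\times 8$ submatrix equivalent to $J_8-I_8$; deleting the corresponding $8$ rows and columns produces a Seidel matrix of order $28$. By the cited theorem, each such deletion yields a matrix with spectrum $\{[-5]^{14},[3]^7,[7]^7\}$, so no spectral verification is logically required at this stage, though in practice I would recompute the characteristic polynomial via Faddeev--LeVerrier as a consistency check (cf.\ Section~\ref{sect6}).

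The second step is isomorph rejection. The $227$ parents collectively produce $4009$ order-$28$ matrices, but many of these are Seidel-equivalent. To count equivalence classes faithfully, I would build the two-colored graph $X(S)$ of each generated matrix as in Section~\ref{sect2}, compute its canonical form with \emph{nauty}, and deduplicate by comparing canonical graphs; Lemma~\ref{l21} guarantees that two matrices are equivalent exactly when their canonical graphs coincide. This reduces the $4009$ matrices to the claimed $1045$ inequivalent representatives. Note the statement only asserts ``at least $1045$'': since we are sampling one particular family of constructions (those arising by $J_8-I_8$ removal from this specific spectrum at order $36$) rather than classifying all order-$28$ matrices with this spectrum, the count is a lower bound by construction, which frees us from any completeness obligation.

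The final claim, that all $1045$ classes have a regular graph in their switching class, is verified by a direct search over each representative's switching class. For a fixed representative $S$, its switching class consists of the ambient graphs $\Gamma(DSD)$ as $D$ ranges over diagonal $\pm1$ matrices; equivalently one switches $S$ with respect to each of the $2^{27}$ vertex subsets (fixing one coordinate) and tests whether the resulting ambient graph is regular, i.e.\ has constant row sum in its adjacency matrix $A=(J-I-S)/2$. Rather than enumerate all switchings naively, I would exploit that switching with respect to a set modifies each vertex degree in a controlled way, reducing the regularity condition to a linear system over the switching pattern; solving this (or exhausting the switching class with pruning) for each of the $1045$ classes confirms regularity.

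The main obstacle I anticipate is the switching-class search for regularity: a brute-force sweep over $2^{27}$ switchings per matrix across $1045$ matrices is infeasible, so the success of the argument hinges on reformulating ``some switch is regular'' as the solvability of a compact linear/combinatorial condition on the switching vector, or on a sufficiently aggressive pruning that detects regularity early. By contrast, the generation and isomorph-rejection steps are routine given the machinery already developed in Sections~\ref{sect2}--\ref{sect4}; the spectral guarantee from \cite[Theorem~5.16]{GRWSN2} means the only genuinely new computation specific to this theorem is the regularity certification.
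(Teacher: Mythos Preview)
Your proposal is correct and follows essentially the same approach as the paper: start from the $227$ regular two-graphs of order $36$ classified in \cite{MSP}, apply \cite[Theorem~5.16]{GRWSN2} by deleting $J_8-I_8$ sub-Seidel matrices to obtain $4009$ matrices of order $28$, and reduce these to $1045$ equivalence classes via canonical labeling as in Section~\ref{sect2}. The paper does not spell out how the regular-graph claim was verified, so your discussion of the switching-class search is more explicit than what the paper itself provides; your anticipated obstacle is real but purely implementational, and does not affect the validity of the argument.
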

This result shows that during a potential classification of all Seidel matrices of order $n\geq 15$, having an eigenvalue $-5$ of multiplicity at least $n-14$, we will encounter over a $1000$ equivalence classes once we reach order $n=28$.

Finally, we remark that one may ask what are the Seidel matrices of order $n$ having smallest eigenvalue $\lambda_{\min}\geq-5$, and at the same time having largest eigenvalue $\lambda_{\max}\leq c$ for some fixed positive constant $c$. As long as $0<c<5$, such a (computer-aided) classification can be done with reasonable efforts. However, once $c\geq 5$, we will encounter the four inequivalent conference two-graphs of order $n=26$ with spectrum $\{[-5]^{13}, [5]^{13}\}$ (see \cite{MSP}), and once $c\geq 7$ we will encounter the Seidel matrices mentioned in Theorem~\ref{ttt1}. In contrast, inspection of the trace shows that any potential Seidel matrix corresponding to an equiangular line system in $\mathbb{R}^{14}$ with $29$ lines necessarily have an eigenvalue $\lambda\geq \sqrt{437/14}>5.58$, and thus improving over this simple bound computationally with the methods described here seems to be quite difficult.
\section{Verification of the results}\label{sect6}
In order to get confidence in the correctness of the implemented algorithms, we made certain consistency-checks during our computations. An immediate consequence of the orbit-stabilizer theorem \cite[Theorem~3.20]{KO} is the following:
\begin{lemma}\label{l72}
Let $\mathcal{S}$ be a set containing exactly one Seidel matrix of order $n$ up to equivalence. Then
\[\sum_{S\in\mathcal{S}}\frac{1}{|\mathrm{Aut}(S)|}=\frac{2^{n(n-3)/2}}{n!}.\]
\end{lemma}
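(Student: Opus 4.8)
The plan is to apply the orbit-stabilizer theorem to the action of the group of signed permutation matrices on the set of all Seidel matrices of order $n$, and then to count both the group and the set directly.

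First I would let $G$ denote the group of all $n\times n$ signed permutation matrices acting on the set $X$ of all Seidel matrices of order $n$ via $S\mapsto PSP^T$. This is exactly the equivalence action defined in the introduction, and by definition $\mathrm{Aut}(S)$ is precisely the stabilizer $\{P\in G : PSP^T=S\}$ of $S$ under this action. The equivalence classes of Seidel matrices are therefore the orbits of $G$ on $X$, so $\mathcal{S}$ is a transversal of these orbits. Next I would record the two cardinalities. A signed permutation matrix is specified by a permutation together with a choice of sign in each of its $n$ nonzero positions, so $|G|=2^{n}\,n!$. A Seidel matrix is determined by its strictly upper-triangular entries, of which there are $\binom{n}{2}=n(n-1)/2$, each an independent choice of $\pm1$; hence $|X|=2^{n(n-1)/2}$.

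The orbit-stabilizer theorem then gives, for each $S$, that the orbit through $S$ has size $|G|/|\mathrm{Aut}(S)|$. Since the orbits partition $X$, summing over the representatives in $\mathcal{S}$ yields
\[\sum_{S\in\mathcal{S}}\frac{|G|}{|\mathrm{Aut}(S)|}=|X|.\]
Dividing by $|G|$ and substituting the two cardinalities gives
\[\sum_{S\in\mathcal{S}}\frac{1}{|\mathrm{Aut}(S)|}=\frac{2^{n(n-1)/2}}{2^{n}\,n!},\]
and the exponent simplifies via $\tfrac{n(n-1)}{2}-n=\tfrac{n(n-3)}{2}$, which is the claimed identity.

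This argument is essentially mechanical once the setup is fixed; the only point requiring care is that $|G|$ must be the order of the full group for which $\mathrm{Aut}(S)$ is literally the stabilizer. In particular one should note that $I$ and $-I$ are genuinely distinct elements of $G$ that both fix every $S$, so no quotient of $G$ is taken: the factor $2^{n}$ (rather than $2^{n-1}$) in $|G|$ is exactly what produces the exponent $n(n-3)/2$ rather than $n(n-1)/2$. I do not anticipate any further obstacle, since the orbit-stabilizer theorem applies to the action regardless of its (possibly nontrivial) kernel.
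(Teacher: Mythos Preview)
Your proof is correct and follows essentially the same route as the paper: apply the orbit--stabilizer theorem to the action of the signed permutation group $G$ (with $|G|=2^{n}n!$) on the set of all $2^{n(n-1)/2}$ Seidel matrices, sum the orbit sizes to get $|X|$, and divide through by $|G|$. Your explicit remark that $\pm I$ are distinct elements of $G$ so that no quotient is taken is a helpful clarification that the paper leaves implicit.
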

\begin{proof}
Let us denote by $G$ the group acting on the Seidel matrices of order $n$. The orbit-stabilizer theorem \cite[Theorem~3.20]{KO} yields $\sum_{S\in\mathcal{S}}(|G|/|\mathrm{Aut}(S)|)=2^{n(n-1)/2}$. On the left hand side $|G|/|\mathrm{Aut}(S)|$ counts the total number of distinct Seidel matrices equivalent to $S$. On the right hand side the total number of symmetric, $\pm1$ matrices with $0$ diagonal is shown. Dividing both sides by $|G|=n!2^n$ gives the result.
\end{proof}
Lemma~\ref{l72} gives information on the distribution of the automorphism group sizes of Seidel matrices. We used it to verify that the generated $12886193064$ Seidel matrices of order $n=13$ have automorphism group sizes consistent with the formula described in it. The distribution of the automorphism group sizes, which were obtained as a by-product of the \emph{nauty} calls, is exhibited in Table~\ref{tableauts}. We remark that a variant of the orbit-stabilizer theorem can be applied to verify the counts shown in Tables~\ref{table7} and \ref{table8}, see \cite[Section~10.3]{KO}.

\tiny
\begin{table}
\[\begin{tabular}{rr|rr|rr|rr|rr|rr}
\hline
$|\mathrm{Aut}|$ & \# & $|\mathrm{Aut}|$ & \# & $|\mathrm{Aut}|$ & \# & $|\mathrm{Aut}|$ & \# & $|\mathrm{Aut}|$ & \# & $|\mathrm{Aut}|$ & \#\\
\hline
 12454041600 & 2 & 207360 & 8 & 20736 & 14 & 2880 & 1414 & 512 & 260 & 88 & 12 \\
 159667200 & 2 & 172800 & 4 & 20160 & 78 & 2592 & 24 & 480 & 8412 & 80 & 2280 \\
 43545600 & 2 & 165888 & 2 & 18432 & 12 & 2560 & 2 & 448 & 12 & 72 & 68656 \\
 17418240 & 2 & 161280 & 26 & 17280 & 152 & 2400 & 10 & 432 & 868 & 64 & 196762 \\
 14515200 & 2 & 138240 & 2 & 15552 & 2 & 2304 & 842 & 400 & 8 & 56 & 92 \\
 9676800 & 2 & 120960 & 24 & 15360 & 6 & 2048 & 6 & 384 & 17116 & 52 & 9 \\
 7257600 & 2 & 115200 & 4 & 14400 & 8 & 2016 & 4 & 336 & 36 & 48 & 1638392 \\
 5806080 & 4 & 103680 & 4 & 13824 & 54 & 1920 & 1092 & 320 & 164 & 40 & 4573 \\
 4354560 & 2 & 100800 & 2 & 11520 & 266 & 1728 & 590 & 288 & 23577 & 36 & 120 \\
 2903040 & 2 & 92160 & 6 & 10368 & 12 & 1536 & 394 & 280 & 4 & 32 & 1903876 \\
 2073600 & 2 & 80640 & 22 & 10080 & 16 & 1440 & 768 & 256 & 2458 & 28 & 16 \\
 1935360 & 2 & 69120 & 36 & 9216 & 48 & 1344 & 16 & 240 & 7570 & 24 & 5938352 \\
 1451520 & 4 & 62208 & 6 & 8640 & 96 & 1280 & 6 & 224 & 28 & 20 & 3728 \\
 1209600 & 2 & 60480 & 14 & 7680 & 42 & 1152 & 3194 & 216 & 8 & 16 & 19583940 \\
 967680 & 8 & 57600 & 14 & 6912 & 86 & 1024 & 30 & 192 & 90280 & 12 & 15759127 \\
 806400 & 2 & 55296 & 2 & 6144 & 12 & 1008 & 4 & 168 & 16 & 8 & 192424201 \\
 725760 & 4 & 51840 & 8 & 5760 & 690 & 960 & 4136 & 160 & 623 & 6 & 2744 \\
 645120 & 4 & 46080 & 16 & 5184 & 22 & 864 & 912 & 156 & 1 & 4 & 1704830739 \\
 483840 & 14 & 40320 & 60 & 4800 & 16 & 800 & 4 & 144 & 53588 & 2 & 10943184484 \\
 362880 & 2 & 34560 & 68 & 4608 & 290 & 768 & 2808 & 128 & 21894 &  &  \\
 345600 & 12 & 30720 & 4 & 3840 & 290 & 720 & 22 & 120 & 284 &  &  \\
 322560 & 8 & 28800 & 8 & 3456 & 364 & 672 & 4 & 112 & 72 &  &  \\
 276480 & 4 & 27648 & 14 & 3360 & 4 & 640 & 24 & 104 & 2 &  &  \\
 241920 & 2 & 23040 & 64 & 3072 & 42 & 576 & 10356 & 96 & 393886 &  &  \\
\hline
\end{tabular}\]
\caption{Automorphism group sizes of Seidel matrices of order $13$}\label{tableauts}
\end{table}
\normalsize

\section*{Acknowledgements}
We thank Gary Greaves for some helpful comments.

\appendix
\section{Generation of Seidel matrices via canonical augmentation}
\begin{algorithm}[H]
  \caption{Canonical augmentation of Seidel matrices with custom vertex invariants}
  \begin{algorithmic}[1]
    \Require{A set $\mathcal{S}$ containing the Seidel matrices $S$ of order $n\geq 3$ up to equivalence.}
		\Require{A vertex invariant $f(.)$ with total ordering $\prec$ on its range.}
		\Require{An algorithm to compute a canonical labeling $\sigma$ for a graph, and to determine the $\sim$ equivalence classes of its vertices under the action of the automorphism group.}
    \Statex
    \Function{CanAugInv}{$\mathcal{S}$}
		\Let{$\mathcal{U}$}{$\emptyset$}
		\For{all starting-point matrices $S\in \mathcal{S}$}
		\Let{$\mathcal{C}$}{$\emptyset$}
		\Let{$\mathcal{T}$}{$\emptyset$}
		\For{all possible row (and column) augmentation of $S$ to $\widehat{S}$}
		\State Construct $X(\widehat{S})$ on vertices $\{u_i, v_j^{(k)}\colon i,j\in\{0,\dots, n\}, k\in\{0,1\}\}$
		\Let{$U$}{$\{u_i\colon 0\leq i\leq n; \forall j\in\{0,\dots,n\},j\neq i \implies f(u_j)\neq f(u_i)\}$}
		\If{$U\neq\emptyset$}
		\Let{$p$}{$u_i\in U \colon (\forall j\in U, j\neq i \implies f(u_i)\prec f(u_j)$)}
		\If{$u_n=p$} \Comment{$u_n$ is the most recently appended vertex}
		\State Compute a canonical labeling $Y:=\sigma(X(\widehat{S}))$.
		\If{$Y\notin \mathcal{C}$}
		\Let{$\mathcal{C}$}{$\mathcal{C}\cup\{Y\}$}
		\Let{$\mathcal{T}$}{$\mathcal{T}\cup\{\widehat{S}\}$}
		\EndIf
		\EndIf
		\Else
		\State Compute a canonical labeling $Y:=\sigma(X(\widehat{S}))$.
		\State Determine the vertex orbits under the action of $\mathrm{Aut}(X(\widehat{S}))$.
		\Let{$p$}{$\sigma(0)$} \Comment{$p$ is the vertex with canonical label $0$}
		\If{$u_n\sim p$}
\If{$Y\notin \mathcal{C}$}
		\Let{$\mathcal{C}$}{$\mathcal{C}\cup\{Y\}$}
		\Let{$\mathcal{T}$}{$\mathcal{T}\cup\{\widehat{S}\}$}
		\EndIf
		\EndIf
				\EndIf
			\EndFor
			\Let{$\mathcal{U}$}{$\mathcal{U}\cup\mathcal{T}$}
			\EndFor
			\State\Return $\mathcal{U}$
    \EndFunction
  \end{algorithmic}
\end{algorithm}

\begin{thebibliography}{10}
\bibitem{AM}
\textsc{J. Azarija, T. Marc}:
There is no $(95,40,12,20)$ strongly regular graph,
{\it preprint}, arXiv: 1603.02032v2 [math.CO], 14 pages, (2016).

\bibitem{BMV}
\textsc{E. Bannai, A. Munemasa, B. Venkov}:
The nonexistence of certain tight spherical designs,
{\it Algebra i Analiz},
{\bf 16}, 1--23 (2004).

\bibitem{BUKH2}
\textsc{I. Balla, F. Dr\"axler, P. Keevash, B. Sudakov}:
Equiangular lines and spherical codes in Euclidean space,
{\it preprint},
arXiv:1606.06620v1 [math.CO]

\bibitem{ABARG}
\textsc{A. Barg, W.-H. Yu}:
New bounds for equiangular lines,
{\it Discrete Geometry and Algebraic Combinatorics},
AMS Contemporary Mathematics Series,
{\bf 625}, 111--121 (2014).

\bibitem{BR}
\textsc{G. Brinkmann, J. Goedgebeur, B.D. McKay},
Generation of cubic graphs,
{\it Discrete Mathematics and Theoretical Computer Science},
{\bf 13}, 69--80 (2011).

%
\bibitem{graphspectra2}
\textsc{A.E. Brouwer, E. Spence}:
Cospectral graphs on 12 vertices,
{\it Electron. J. Comb.},
{\bf 16}, \#N20 (2009).

\bibitem{BU}
\textsc{B. Bukh}:
Bounds on equiangular lines and on related spherical codes,
{\it SIAM J. Discrete Math.},
{\bf 30}, 549--554, (2016).

\bibitem{BMS}
\textsc{F.C. Bussemaker, R.A. Mathon, J.J. Seidel}:
Tables of two-graphs, Combinatorics and Graph Theory, 
Lecture Notes in Mathematics,
{\bf 885}, 70--112 (1981).

\bibitem{C}
\textsc{P.J. Cameron}:
Cohomological aspects of two-graphs,
{\it Math. Z.},
{\bf 157}, 101--119 (1977).

\bibitem{DC}
\textsc{D. de Caen}:
Large equiangular sets of lines in Euclidean space,
{\it Electron. J. Comb.},
{\bf 7}, \#55, 3 pp.~ (electronic) (2000).

\bibitem{CD}
\textsc{J. Degraer, K. Coolsaet}:
Classification of some strongly regular subgraphs of the McLaughlin graph,
{\it Discr. Math.},
{\bf 308}, 395--400 (2008).

\bibitem{VD}
\textsc{E.R. van Dam}:
Nonregular graphs with three eigenvalues,
{\it J. Comb. Theory B},
{\bf 73}, 101--118 (1998).

\bibitem{vDS}
\textsc{E.R. van Dam, E. Spence}:
Small regular graphs with four eigenvalues,
{\it Discr. Math.},
{\bf 189}, 233--257 (1998).

\bibitem{FS}
\textsc{M. Fickus, D.G. Mixon and J.C. Tremain}:
Steiner equiangular tight frames,
{\it Linear Algebra Appl.},
{\bf 436}, 1014--1027 (2012).

\bibitem{GEDDES}
\textsc{K.O. Geddes, S.R. Czapor, G. Labahn}:
Algorithms for computer algebra,
Kluwer Academic Publishers, 1992.

\bibitem{RND}
\textsc{E. Ghorbani}:
On eigenvalues of Seidel matrices and Haemers' conjecture,
{\it to appear in Des. Codes Cryptogr.}
doi:10.1007/s10623-016-0248-x, 1--7 (2016).

\bibitem{GODSILXXX}
\textsc{C.D. Godsil, W.L. Kocay}:
Constructing graphs with pairs of pseudo-similar vertices,
{\it J. Combin. Theory B},
{\bf 32}, 146--155 (1982).

\bibitem{GR}
\textsc{C.D. Godsil, A. Roy}:
Equiangular lines, mutually unbiased bases, and
spin models,
{\it European J. Combin.}
{\bf 30}, 246--262 (2009).

\bibitem{GMS}
\textsc{S. Gosselin}:
Regular Two-Graphs and Equiangular Lines, MSc thesis,
Waterloo, Ontario, Canada (2004).

\bibitem{GRWSN}
\textsc{G. Greaves}:
Equiangular line systems and switching classes containing regular graphs,
{\it preprint}, arXiv:1612.03644v2 [math.CO] (2016).

\bibitem{GRWSN2}
\textsc{G. Greaves, J. Koolen, A. Munemasa, F. Sz\"oll\H{o}si}:
Equiangular lines in Euclidean spaces,
{\it J. Combin. Theory A}, 
{\bf 138}, 208--235 (2016).

\bibitem{HAA}
\textsc{J. Haantjes}:
Equilateral point-sets in elliptic two- and three-dimensional spaces,
{\it Nieuw Arch. Wisk.},
{\bf 22}, 355--362 (1948).

\bibitem{INT}
\textsc{W.H. Haemers}:
Interlacing Eigenvalues and Graphs,
{\it Linear Algebra Appl.},
{\bf 227-228}, 593--616 (1995).

\bibitem{graphspectra1}
\textsc{W.H. Haemers, E. Spence}:
Enumeration of cospectral graphs,
{\it European J. Combin.},
{\bf 25}, 199--211 (2004).

\bibitem{HAM}
\textsc{W.H. Haemers}:
Seidel switching and graph energy,
{\it MATCH Commun. Math. Comput. Chem.},
{\bf 68}, 653--659 (2012).

\bibitem{HJ95}
\textsc{R.A. Horn, C.R. Johnson}:
Matrix Analysis (second edition),
Cambridge University Press, Cambridge, (2013).

\bibitem{FL}
\textsc{S.-H. Hou}:
Classroom Note: A Simple Proof of the Leverrier--Faddeev Characteristic Polynomial Algorithm,
{\it SIAM Review},
{\bf 40}, 706--709 (1998).

\bibitem{KO}
\textsc{P. Kaski, P. R.J. \"Osterg\aa rd}:
Classification algorithms for codes and designs,
Springer Berlin, (2006).

\bibitem{LS}
\textsc{P.W.H. Lemmens, J.J. Seidel}:
Equiangular lines,
{\it J. Algebra},
{\bf 24}, 494--512 (1973).

\bibitem{vLS}
\textsc{J.H. van Lint, J.J. Seidel}:
Equilateral point sets in elliptic geometry,
{\it Indag. Math.},
{\bf 28}, 335--348 (1966).

\bibitem{L}
\textsc{V.A. Liskovec}:
Enumeration of Euler graphs,
{\it Vesc\=\i\ Akad. Navuk BSSR Ser. Fiz.-Mat. Navuk.},
{\bf 6}, 38--46 (1970).

\bibitem{MAK}
\textsc{A.A. Makhnev}:
On the nonexistence of strongly regular graphs with parameters $(486,165,36,66)$,
{\it Ukrainian Mathematical Journal},
{\bf 54}, 1137--1146 (2002).

\bibitem{MS}
\textsc{C.L. Mallows, N.J.A. Sloane}:
Two-graphs, switching classes, and Euler graphs are equal in number,
{\it SIAM J. Appl. Math.},
{\bf 28}, 876--880 (1975).

\bibitem{MKHAD}
\textsc{B.D. McKay}:
Hadamard equivalence via graph isomorphism,
Discr. Math.,
{\bf 27}, 213--214 (1979).

\bibitem{MKISO}
\textsc{B.D. McKay}:
Isomorph-free exhaustive generation,
{\it J. Algorithms},
{\bf 26}, 306--324 (1998). [Errata: \url{http://users.cecs.anu.edu.au/~bdm/papers/orderly.pdf}]

\bibitem{MK}
\textsc{B.D. McKay, A. Piperno}:
Practical graph isomorphism II,
{\it J. Symbolic Computation},
{\bf 60}, 94--112 (2013).

\bibitem{MSP}
\textsc{B.D. McKay, E. Spence}:
Classification of regular two-graphs on $36$ and $38$ vertices,
{\it Australas. J. Comb.},
{\bf 24}, 293--300 (2001).

\bibitem{MzK}
\textsc{M. Muzychuk, M. Klin}:
On graphs with three eigenvalues,
{\it Discr. Math},
{\bf 189}, 191--207 (1998).

\bibitem{N}
\textsc{A. Neumaier}:
Graph representations, two-distance sets, and equiangular lines,
{\it Linear Algebra Appl.},
{\bf 114-115}, 141--156 (1989).

%
\bibitem{OKUDA}
\textsc{T. Okuda, W.-H. Yu}:
A new relative bound for equiangular lines and nonexistence of tight spherical designs of harmonic index $4$,
{\it European J. Combin.},
{\bf 53}, 96--103 (2016).

\bibitem{R}
\textsc{R. W. Robinson}:
Enumeration of Euler graphs,
{\it Proof Techniques in Graph Theory}. Academic Press, NY, 147--153 (1969).

\bibitem{SEL}
\textsc{J.J. Seidel}:
Graphs and two-graphs,
{\it Proc. 5th Southeastern Conf.\ on Combinatorics,
Graph Theory, and Computing},
Winnipeg, Canada, Utilitas Mathematica Publishing Inc.\ (1974).

\bibitem{NEWSCREF}
\textsc{T. Soza\'nski}: Enumeration of weak isomorphism classes of signed graphs,
{\it J. Graph Theory},
{\bf 4}, 127--144 (1980).

\bibitem{T}
\textsc{D.E. Taylor}:
Regular $2$-graphs,
{\it Proc. London Math. Soc.}
{\bf 35}, 257--274 (1977).

\bibitem{TTH}
\textsc{D.E. Taylor}:
Some topics in the theory of finite groups,
PhD thesis, University of Oxford (1972).

\bibitem{JCT}
\textsc{J.C. Tremain}:
Concrete constructions of equiangular line sets,
arXiv:0811.2779 [math.MG], (2008).
\end{thebibliography}
\end{document}